\begin{document}
\title[An explicit stationary phase formula]
{An explicit stationary phase formula for the~local formal Fourier-Laplace~transform}

\begin{abstract}
We give an explicit formula (i.e., an explicit expression for the formal stationary phase formula) for the local Fourier-Laplace transform of a formal germ of meromorphic connection of one complex variable with a possibly irregular singularity.
\end{abstract}

\subjclass[2000]{Primary 32S40; Secondary 14C30, 34Mxx}

\keywords{Meromorphic connection, irregular singularity, local Fourier-Laplace transform, stationary phase formula}

\author[C\ptbl Sabbah]{Claude Sabbah}
\address{UMR 7640 du CNRS\\
Centre de Math\'ematiques Laurent Schwartz\\
\'Ecole polytechnique\\
F--91128 Palaiseau cedex\\
France}
\email{sabbah@math.polytechnique.fr}
\urladdr{http://www.math.polytechnique.fr/~sabbah}

\maketitle

\section*{Introduction}
We will denote by $\Clt$ (\resp $\Chlt$) the field $\CC\lcr t\rcr[\tm]$ (\resp $\CC\{t\}[\tm]$) of formal (\resp convergent) Laurent series of the variable $t$, equipped with its usual derivation~$\partial_t$.

Let $M$ be a finite dimensional $\Clt$-vector space with a connection $\nabla$. The local formal Laplace transform $\cF^{(0,\infty)}$ (also called Fourier transform in the literature) was introduced in \cite{B-E04,Garcia04} by analogy with the $\ell$-adic local Fourier transform considered in \cite{Laumon87}. One way to produce it is to choose a free $\CC[t,\tm]$-module $\cM$ of finite rank equipped with a connection $\nabla$ having poles at most at $t=0$ and $t=\infty$, with a regular singularity at infinity, and such that $(\Clt\otimes_{\CC[t,\tm]}\cM,1\otimes\nabla)=(M,\nabla)$. Considering~$\cM$ as a $\CC[t]\langle\partial_t\rangle$-module, its (global) Laplace transform $F\cM$ is the same $\CC$-vector space equipped with the $\CC[\tau]\langle\partial_\tau\rangle$-structure defined by the correspondence $\tau=\partial_t$, $\partial_\tau=-t$. Tensoring with $\CC[\tau,\tau^{-1}]$ gives a $\CC[\tau,\tau^{-1}]\langle\tau\partial_\tau\rangle$-module $F\cM[\tau^{-1}]$, and renaming $\theta=\tau^{-1}$ (and setting $\theta\partial_\theta=-\tau\partial_\tau$), we regard $F\cM[\tau^{-1}]$ as a $\CC[\theta,\theta^{-1}]\langle\theta\partial_\theta\rangle$-module. Lastly, we define $\cF^{(0,\infty)}M$ as $\Clth\otimes_{\CC[\theta,\theta^{-1}]}F\cM[\tau^{-1}]$, equipped with its natural connection. This does not depend of the choices made.

The previous transform corresponds to using the kernel $e^{-t/\theta}$, and is also denoted by $\cF^{(0,\infty)}_-$. Its inverse transform is denoted by $\cF^{(\infty,0)}_+$. There are also pairs of inverse transforms $(\cF^{(s,\infty)}_\pm,\cF^{(\infty,s)}_\mp)$ for any $s\in\CC$ and $(\cF^{(\infty,\infty)}_\pm,\cF^{(\infty,\infty)}_\mp)$. If we denote by $F_\pm$ the algebraic Laplace transform with kernel $e^{\pm t\tau}$ acting on a $\CC[t]\langle\partial_t\rangle$-module $\cM$, the local formal stationary phase formula of \cite{B-E04,Garcia04} relies the formalization of the Laplace transform of $\cM$ at each of its singularities ($0$,~$\wh s\in\CC^*$,~$\wh\infty$) with the local formal Laplace transforms of $\cM$ itself at its singularities $0$,~$s\in\CC^*$,~$\infty$. Decomposing with respect to slopes at infinity gives the following diagram:
\[\def\labelstyle{\scriptstyle}
\xymatrix@R=1.5cm{
\ar@<3mm>@/_1pc/[d]_(.35){F_-}
&\ar@/_1pc/[d]_(.35){\cF^{(0,\infty)}_-\!\!}\cM_0
&\ar@/_1pc/[d]_(.35){\oplus\cF^{(s,\infty)}_-\!\!}\oplus_{s\neq0}\cM_s
&\ar@/_1pc/[d]_(.35){\cF^{(\infty,\infty)}_-\!\!}\cM_\infty^{>1}
&\ar@/_1pc/[d]_(.35){\oplus\cF^{(\infty,\wh s)}_-\!\!}\cM_\infty^{=1}
&\ar@/_1pc/[d]_(.35){\cF^{(\infty,0)}_-\!\!}\cM_\infty^{<1}&\\
&\wh \cM_\infty^{<1}\ar@/_1pc/[u]_(.35){\!\cF^{(\infty,0)}_+}
&\wh \cM_\infty^{=1}\ar@/_1pc/[u]_(.35){\!\oplus\cF^{(\infty,s)}_+}
&\wh \cM_\infty^{>1}\ar@/_1pc/[u]_(.35){\!\cF^{(\infty,\infty)}_+}
&\oplus_{\wh s\neq0}\wh \cM_{\wh s}\ar@/_1pc/[u]_(.35){\!\oplus\cF^{(\wh s,\infty)}_+}
&\wh \cM_0\ar@/_1pc/[u]_(.35){\!\cF^{(0,\infty)}_+}
&\ar@<3mm>@/_1pc/[u]_(.35){F_+}
}
\]

Starting from a given $\Clt$-vector space $M$ with connection, the explicit computation of $\cF^{(0,\infty)}M$ (or of the other local transforms) by means of equations can be cumbersome (see however a simple example in \S\ref{sec:example}). In this article, we show (Theorem \ref{th:main}) how to pass explicitly from the Turrittin-Levelt decomposition of~$M$ to that of $\cF^{(0,\infty)}M$ (and similarly for the other local transforms). We note that such formulas were already mentioned by G\ptbl Laumon \cite[\S2.6.3]{Laumon87} and attributed to B\ptbl Malgrange, as a motivation for similar formulas in the $\ell$-adic situation. Such formulas do not seem to be explicitly written (nor proved) in the literature (see the remark below, however). In this article, we provide a geometric method for the proof.

After some preliminaries fixing notation (\S\ref{sec:prelim}), we introduce elementary formal meromorphic connections and give their main properties (\S\ref{sec:elemform}) and we recall in \S\ref{sec:turrittin} the refined Turrittin-Levelt decomposition, as obtained in \cite{B-E04}. The results of \S\ref{sec:exptwist} are mainly given in this article for a better understanding of the formulas in \S\ref{sec:lapalceloc}, but are not directly used in the proof of the main theorem (Theorem \ref{th:main}). Lastly, in \S\ref{subsec:rigidity}, we give some consequences of the theorem of preservation of the index of rigidity, proved in \cite{B-E04} following the proof of \cite{Katz96} in the $\ell$-adic case.

\begin{remarque*}
After this article was written up, Ricardo Garc\'\i a L\'opez pointed out to me the preprint \cite{Fu07}, where a similar calculation for local Fourier transforms is done in the $\ell$-adic case. The formulas we give in \S\ref{sec:lapalceloc} are the complex analogues of that in \cite{Fu07}. Let us also mention the article \cite{Fang07}, the results of which where obtained approximately at the same time as the results of the present article, and in an independent way. The methods developed in \cite{Fang07} are of a more computational flavour, while those in the present article emphasize the geometry of the formulas; we try in particular to explain in a more general context (\cf \S\ref{sec:exptwist}) the reason why the Legendre transform occurs in the formula for $\wh\varphi$ of Theorem \ref{th:main}.
\end{remarque*}

\subsubsection*{Acknowledgements}
The results of this article came out from discussions with Ricardo Garc\'\i a L\'opez on the one hand, and with C\'eline Roucairol on the other hand. I~thank both of them. I also thank Hélène Esnault for useful conversations on this subject during Lê's fest.

\section{Preliminaries}\label{sec:prelim}

\subsection{Operations on vector spaces with connection}
We work in the abelian category of $\Clt$-vector spaces with a connection usually denoted by $\nabla$. This category has tensor products and duality in a natural way. We simply denote the action of $\nabla_{\partial_t}$ as $\partial_t$. We usually omit, when there should be no confusion, the subscript denoting the field when using tensor products. It will be convenient to denote by $\bun$ the field itself with its connection~$d$.

Let $\rho\in u\Cu$ with valuation $p\geq1$. We regard $\rho$ as a morphism of degree $p$ from the formal disc with coordinate $u$ to the formal disc with coordinate $t$ through the correspondence $\rho:\Ct\to\Cu$, $t\mto\rho(u)$.

Let $M$ be a finite dimensional $\Clt$-vector space equipped with a connection~$\nabla$. The pull-back $\rho^+M$ is the vector space $\rho^*M=\Clu\otimes_{\Clt}M$ equipped with the pull-back connection $\rho^*\nabla$ defined by $\partial_u(1\otimes m)=\rho'(u)\otimes\partial_tm$.

Let $N$ be a $\Clu$-vector space with connection. The push-forward $\rho_+N$ is defined as follows:

\eqitem{enum:a}
the $\Clt$-vector space $\rho_*N$ is the $\CC$-vector space $N$ equipped with the structure of $\Clt$-vector space given by $f(t)\cdot m\defin f(\rho(u))m$,

\eqitem{enum:b}
the action of $\partial_t$ is that of $\rho'(u)^{-1}\partial_u$.

\smallskip
The projection formula holds:
\begin{equation}\label{eq:proj}
\rho_+(N\otimes_{\Clu}\rho^+M)\simeq\rho_+N\otimes_{\Clt}M.
\end{equation}

If $N^*$ denotes $\Hom_{\Clu}(N,\Clu)$ with its natural connection, we have $\rho_+(N^*)\simeq(\rho_+N)^*$.
Therefore we also have the projection formula
\[
\rho_+\Hom_{\Clu}(N,\rho^+M)\simeq\Hom_{\Clt}(\rho_+N,M).
\]

Let $\varphi\in\Clu$. We denote by $\cE^\varphi$ the rank-one vector space $\Clu$ equipped with the connection $\nabla=d+d\varphi$, \ie such that $\nabla_{\partial_u}1=\varphi'$. We have $\cE^\varphi\simeq\cE^\psi$ if and only if $\varphi\equiv\psi\bmod\Cu$.
 
\subsection{Moderate nearby cycles}
Let $X$ a smooth complex algebraic variety and let $f:X\to\Afu$ be a function on $X$, defining a reduced divisor $D=f^{-1}(0)$. Let~$\cM$ be a holonomic left $\cD_X$-module such that $\cM=\cO_X(*D)\otimes_{\cO_X}\cM$ (we then say that $\cM$ is localized away from $D$). The (moderate) nearby cycles module $\psi_f\cM$ is a holonomic left $\cD_X$-module supported on $D$ equipped with an automorphism $T:\psi_f\cM\to\psi_f\cM$.

Let $\pi:X'\to X$ be a proper modification inducing an isomorphism $X'\moins\pi^{-1}(D)\to X\moins D$ and let us set $f'=f\circ\pi$, $D'=\pi^{-1}(D)=f^{\prime-1}(0)$. If $\cM'$ is a holonomic left $\cD_{X'}$-module localized away from $D'$, we will denote by $\pi^0_+\cM'$ the holonomic $\cD_X$-module $\cH^0\pi_+\cM'(*D)$. As a $\cO_X(*D)$-module, it is equal to $\pi_*\cM'$.

As $\pi$ is proper, we have (see \eg \cite{MSaito86,M-S86b})
\begin{equation}\label{eq:psipi}
\psi_f\pi^0_+\cM'\simeq\cH^0\pi_{|D',+}\psi_{f'}\cM',\quad \cH^j\pi_{|D',+}\psi_{f'}\cM'=0\text{ if }j\neq0.
\end{equation}

Let us assume that $X$ is the affine space $\AA^2$ with coordinates $(x_1,x_2)$ and that $f(x_1,x_2)=x_1^{m_1}x_2^{m_2}=x^m$ with $m_1\in\NN$ and $m_2\in\NN^*$. Let us set $D_1=\{x_2=0\}$ with coordinate $x_1$ and $D=|\{x^m=0\}|$. Let $\cR$ be a locally free $\cO_X(*D)$-module of finite rank with a flat connection having regular singularitites along $D$. Then $\cR$ is also a regular holonomic $\cD_X$-module. The $\cD_X$-module $\psi_f\cR$ is supported on $D$. Moreover, if $m_1\neq0$, $(\psi_f\cR)[x_1^{-1}]$ is supported on $D_1$ and is the direct image (in the sense of left $\cD_X$-modules) by the inclusion $D_1\hto\AA^2$ of a regular holonomic $\cD_{D_1}$-module localized (and smooth) away from $\{x_1=0\}$. We will not distinguish between both, according to Kashiwara's equivalence.

\begin{proposition}[{\cf \cite[Lemma~III.4.5.10]{Bibi97}}]\label{prop:bibi}
With the previous setting, for any $\lambda\in\CC^*$,
\begin{enumerate}
\item\label{prop:bibi1}
if $n\in(\NN^*)^2$, $\psi_f(\cE^{\lambda/x^n}\otimes\cR)=0$
\item\label{prop:bibi2}
if $n_1\in\NN^*$, $\psi_f(\cE^{\lambda/x_1^{n_1}}\otimes\cR)$ is supported on $D_1$; it is isomorphic to \hbox{$\cE^{\lambda/x_1^{n_1}}\otimes\big((\psi_f\cR)[x_1^{-1}]\big)$} (with monodromy).\qed
\end{enumerate}
\end{proposition}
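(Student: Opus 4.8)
I would argue by localising on $D=\{x^m=0\}$ and treating the smooth points of the two branches of $D$ and the origin separately. The elementary input is the one‑variable fact: on a disc with coordinate $s$, if $\cR$ is regular holonomic and localised at $s=0$ and $k\geq1$, then $\psi_{s^k}(\cE^\theta\otimes\cR)=0$ for every $\theta$ with a pole at $s=0$ — the horizontal section $e^{-\theta}$ has moderate growth in no sector, equivalently the $V$‑filtration of $\cE^\theta\otimes\cR$ along $s=0$ has no integer jump.

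At a smooth point of one branch of $D$, away from the origin, $f$ is a unit times a power of the coordinate vanishing on that branch. In case~\eqref{prop:bibi1} the exponent $\lambda/x^n$ then has a pole in that coordinate (of order $n_2\geq1$ along $\{x_2=0\}$, of order $n_1\geq1$ along $\{x_1=0\}$), so the one‑variable input gives $\psi_f(\cE^{\lambda/x^n}\otimes\cR)=0$ there, whence this module is supported at the origin. In case~\eqref{prop:bibi2}, $e^{-\lambda/x_1^{n_1}}$ is a single‑valued, nowhere‑vanishing holomorphic function on $X\moins\{x_1=0\}$, and multiplication by it is an isomorphism $\cR\simeq\cE^{\lambda/x_1^{n_1}}\otimes\cR$ over $X\moins\{x_1=0\}$; combined with the one‑variable input along $\{x_1=0\}$ (where $\lambda/x_1^{n_1}$ has a pole), this shows that $\psi_f(\cE^{\lambda/x_1^{n_1}}\otimes\cR)$ is supported on $D_1$ and is isomorphic to $\cE^{\lambda/x_1^{n_1}}\otimes(\psi_f\cR)[x_1^{-1}]$ over $D_1\moins\{0\}$, compatibly with the monodromy.

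It remains to treat the origin, where all the difficulty lies: for~\eqref{prop:bibi1} one must show that the punctual module $\psi_f(\cE^{\lambda/x^n}\otimes\cR)$ vanishes, and for~\eqref{prop:bibi2} that the isomorphism found over $D_1\moins\{0\}$ extends across it. I would pass to the real oriented blow‑up of $X$ along $D$ and use that the restriction to the exceptional boundary of the moderate de Rham complex of $\cE^\varphi\otimes\cR$, together with its $f$‑monodromy, computes $\psi_f$ at the origin. When $\varphi=\lambda/x^n$ has poles along both branches its rapid‑decay locus is a proper band of the boundary torus and the resulting complex is acyclic, which gives~\eqref{prop:bibi1}; when $\varphi=\lambda/x_1^{n_1}$ depends only on $x_1$, the Stokes structure it defines is pulled back from the $x_1$‑disc, so the twist by $\cE^{\lambda/x_1^{n_1}}$ and the localisation $[x_1^{-1}]$ factor through $\psi_f$ and one reads off the isomorphism of~\eqref{prop:bibi2}, the monodromy on both sides being that of $\psi_f\cR$. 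For~\eqref{prop:bibi1} one can alternatively restrict along a generic line $i_C:\{x_2=cx_1\}\hto X$: then $f|_C=c^{m_2}x_1^{m_1+m_2}$ and $(\lambda/x^n)|_C$ has a pole of order $n_1+n_2\geq2$, so $\psi_{f|_C}i_C^+(\cE^{\lambda/x^n}\otimes\cR)=0$, and a base‑change isomorphism $i_C^+\psi_f\simeq\psi_{f|_C}i_C^+$ along such a curve then forces the punctual module to vanish.

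The crux, in either approach, is the local behaviour of moderate nearby cycles at the normal‑crossing point of $D$, where $\cE^\varphi\otimes\cR$ is an elementary module whose exponential has poles along the singular locus of $D$. Toric modifications are of no help: blowing up the origin (and using~\eqref{eq:psipi}) only raises the pole orders of $f$ and of $\varphi$ along the exceptional divisor, so the configuration cannot be simplified and must be analysed directly. What needs genuine work is thus the acyclicity of the restricted moderate de Rham complex on the boundary torus, respectively the compatibility of $\psi_f$ with the irregular twist and with localisation together with the identification of the monodromy; both are instances of the explicit computation of moderate nearby cycles of good elementary modules carried out in \cite[Lemma~III.4.5.10]{Bibi97}.
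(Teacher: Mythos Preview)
The paper does not give its own proof: the statement is quoted from \cite[Lemma~III.4.5.10]{Bibi97} and marked as proved, with the sole comment afterwards that the argument in \loccit\ already yields the sharper form of~\eqref{prop:bibi2}. Your principal route---localise on $D$, invoke the one-variable vanishing at smooth points, and handle the normal-crossing point via the real oriented blow-up and the moderate de~Rham complex---is exactly the method of the cited reference, as you yourself acknowledge in your final paragraph. So on the main line there is nothing to compare: you are outlining the proof the paper defers to.

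One caveat concerns your alternative argument for~\eqref{prop:bibi1}. The base-change $i_C^+\psi_f\simeq\psi_{f|_C}\,i_C^+$ along a generic line through the origin is standard for \emph{regular} holonomic coefficients under a non-characteristic hypothesis, but here $\cE^{\lambda/x^n}\otimes\cR$ is irregular along $D$ and $C$ meets the irregular locus precisely at the point you need to control. Establishing such a compatibility in this situation is not formal and amounts to the same moderate-growth analysis on the real blow-up that constitutes the direct proof. So this alternative does not furnish an independent or more elementary route; the substantive work is unchanged.
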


Let us note that \ref{prop:bibi}\eqref{prop:bibi2} is stated in a weaker way in \loccit, but the argument given in the proof gives \ref{prop:bibi}\eqref{prop:bibi2}.

\section{Elementary formal meromorphic connections}\label{sec:elemform}

Let $M$ be a finite dimensional $\Clt$-vector space equipped with a connection~$\nabla$. The classical Turrittin-Levelt theorem asserts that, after a suitable ramification \hbox{$\rho:u\mto t=u^p$}, the pull-back $\rho^+M$ can be decomposed into elementary formal connections $\cE^\varphi\otimes R_\varphi$, where $\varphi\in\Clu$ and $R_\varphi$ has a regular singularity. Moreover, it is known that $M$ itself can be decomposed according to the slopes of its Newton polygon (see, \eg \cite{Malgrange91}).

In the next section, we refine the decomposition with respect to slopes, in order to keep as much information as possible from the Turrittin-Levelt decomposition, by using the elementary formal connections that we define now.

\begin{definition}[Elementary formal connections]\label{def:elem}
Given $\rho\in u\Cu$, $\varphi\in\Clu$ and a finite dimensional $\Clu$-vector space~$R$ with regular connection $\nabla$, we define the associated elementary finite dimensional $\Clt$-vector space with connection by
\[
\El(\rho,\varphi,R)=\rho_+(\cE^\varphi\otimes R).
\]
\end{definition}

If $p$ denotes the order of $\rho$, $q$ the order of the pole of $\varphi$ and $r$ the rank of $R$, then
\begin{itemize}
\item
$\El(\rho,\varphi,R)$ has only one slope, which is equal to $q/p$,
\item
the irregularity number $\irr_0\El(\rho,\varphi,R)$ is equal to $qr$,
\item
the rank of $\El(\rho,\varphi,R)$ is equal to $pr$.
\end{itemize}
Up to isomorphism, $\El(\rho,\varphi,R)$ only depends on~$\varphi\bmod\Cu$. Standard results on regular formal meromorphic connections then show that, up to isomorphism, any elementary vector space $\El(\rho,\varphi,R)$ with connection is defined over the field of convergent series $\Chlt$. Let us also note that giving $R$ is equivalent to giving a finite dimensional $\CC$-vector space equipped with an automorphism $T$.

Let us first distinguish the isomorphism classes of the elementary finite dimensional $\Clt$-vector spaces with connection.

\begin{lemme}\label{lem:autom}
Assume that $\rho'(0)\neq0$ (\ie $\rho$ is an automorphism of the formal disc). Then $\El(\rho,\varphi,R)\simeq \El(\Id,\psi,S)$ if and only if $\psi\circ\rho\equiv\varphi\bmod\Cu$ and $S\simeq R$.
\end{lemme}

\begin{proof}
Let us denote by $\lambda$ the reciprocal series of $\rho$, that is, $\rho\circ\lambda(t)=t$, $\lambda\circ\rho(u)=u$. We use that, $\rho_+=\lambda^+$. Then
$$
\El(\rho,\varphi,R)=\lambda^+(\cE^\varphi\otimes R)=\cE^{\varphi\circ\lambda}\otimes\lambda^+R\simeq\cE^{\varphi\circ\lambda}\otimes R,
$$
hence, tensoring with $\cE^{-\psi}$, we find $S\simeq\cE^{\varphi\circ\lambda-\psi}\otimes R$, and the lemma follows easily.
\end{proof}

Let us come back to the general situation where $\rho$ has degree $p\geq1$. We deduce from the previous lemma that any elementary vector space $\El(\rho,\varphi,R)$ with connection is isomorphic to an elementary vector space $\El([u\mto u^p],\psi,R)$ with connection for a suitable $\psi$. More precisely, if $u\mto\lambda(u)$ is a formal automorphism (\ie $\lambda'(0)\neq0$) then
\begin{equation}\label{eq:rholambda}
\El(\rho,\varphi,R)\simeq\El(\rho\circ\lambda,\varphi\circ\lambda,R).
\end{equation}

Let us denote by $\rho$ the map $u\mto u^p$ and by $\mu_\zeta$ the map $u\mto\zeta u$.

\begin{lemme}\label{lem:pull-back}
For any $\varphi\in\Clu$, we have
\[
\rho^+\rho_+\cE^\varphi=\tbigoplus_{\zeta^p=1}\cE^{\varphi\circ\mu_\zeta}.
\]
\end{lemme}

\begin{proof}
We choose a $\Clu$-basis $e$ of $\cE^\varphi$ and assume for simplicity that $\varphi\in\um\CC[\um]$. Then the family $e,ue,\dots,u^{p-1}e$ is a $\Clt$-basis of $\rho_+\cE^\varphi$. Set $e_k=u^{-k}\otimes_{\Clt}u^ke$. Then the family $\bme=(e_0,\dots,e_{p-1})$ is a $\Clu$-basis of $\rho^+\rho_+\cE^\varphi$. Let us decompose $u\varphi'(u)=\sum_{j=0}^{p-1}u^j\psi_j(u^p)$ with $\psi_j\in\CC[\tm]$ for any~$j\geq1$ and $\psi_0\in\tm\CC[\tm]$. Let $\rP$ denote the permutation matrix defined by $\bme\cdot \rP=(e_1,\dots,e_{p-1},e_0)$. Using \eqref{enum:b}, we find that
\[
u\partial_ue_k=\sum_{j=0}^{p-k-1}u^j\psi_je_{k+j}+\sum_{j=p-k}^{p-1}u^j\psi_je_{k+j-p},
\]
that is,
\[
u\partial_u\bme=\bme\cdot\Big[\sum_{j=0}^{p-1}u^j\psi_j\rP^j\Big].
\]
The result is obtained by diagonalizing $\rP$.
\end{proof}

\begin{remarque}\label{rem:partialdecomp}
Let us decompose $p$ as a product $p=p'd$ and $\rho:u\mto u^p$ as $\rho_d\circ\rho'$ correspondingly. Then $\rho_d^+\rho_+\cE^\varphi$ decomposes as $\bigoplus_{k=0,\dots,d-1}\rho'_+\cE^{\varphi\circ\mu_{\exp2\pi ik/p}}$. Indeed, this is obtained by writing $\rho^+\rho_+\cE^\varphi=\rho^{\prime+}(\rho_d^+\rho_+\cE^\varphi)$ as the double direct sum of the terms $\cE^{\varphi\circ\mu_{\exp2\pi ik/p}\circ\mu_{\zeta'}}$, where $\zeta'$ varies among the $p'$th roots of the unity and $k$ varies from $0$ to $d-1$.
\end{remarque}

\begin{lemme}\label{lem:isomelem}
We have $\El([u\mto u^p],\varphi,R)\simeq \El([u\mto u^p],\psi,S)$ if and only if the following properties are satisfied:
\begin{enumerate}
\item\label{lem:isomelem1}
there exists $\zeta$ with $\zeta^p=1$ and $\psi\circ\mu_\zeta\equiv\varphi\bmod\Cu$,
\item\label{lem:isomelem2}
$S\simeq R$ as $\Clu$-vector spaces with connection.
\end{enumerate}
\end{lemme}

\begin{proof}
For the ``if'' part, notice that $\rho=\rho\circ\mu_\zeta$. We have $\mu_\zeta^+(\cE^\varphi\otimes R)\simeq\cE^{\varphi\circ\mu_\zeta}\otimes R$, hence $\cE^\varphi\otimes R\simeq\mu_{\zeta+}(\cE^{\varphi\circ\mu_\zeta}\otimes R)$, so $\rho_+(\cE^\varphi\otimes R)\simeq\rho_+(\cE^{\varphi\circ\mu_\zeta}\otimes R)$.

For the ``only if'' part, let us choose a finite dimensional $\Clt$-vector space with regular connection, that we denote by $R^{1/p}$, such that $\rho^+R^{1/p}=R$ (this amounts to choose a $p$th root of an automorphism of a $\CC$-vector space). We then have
$$
\rho_+(\cE^\varphi\otimes R)=\rho_+(\cE^\varphi\otimes \rho^+R^{1/p})=\rho_+\cE^\varphi\otimes R^{1/p},
$$
hence $\rho^+\rho_+(\cE^\varphi\otimes R)=(\rho^+\rho_+\cE^\varphi)\otimes R$. If $\rho_+(\cE^\varphi\otimes R)\simeq\rho_+(\cE^\psi\otimes S)$, we can lift this isomorphism after $\rho^+$. From Lemma \ref{lem:pull-back} and the previous computation we deduce that $\rho^+\rho_+(\cE^\varphi\otimes R)$ decomposes as a sum of terms $\cE^{\varphi\circ\mu_\eta}\otimes R^\nu$, where~$\nu$ is the number of $\zeta$'s such that $\varphi\circ\mu_\zeta\equiv\varphi\bmod\Cu$, and where $\eta$ is such that $\eta^{p/\nu}=1$. It follows that \eqref{lem:isomelem1} is satisfied and that $R^\nu\simeq S^\nu$. Then $R\simeq S$ (if two automorphisms $T,T'$ of a finite dimensional $\CC$-vector space are such that $\oplus_{i=1}^\nu T$ is conjugate to $\oplus_{i=1}^\nu T'$, then $T$ and $T'$ are conjugate: this can be seen by considering the Jordan normal forms).
\end{proof}

\begin{corollaire}\label{cor:isompp}
Let $\El(\rho_1,\varphi_1,R_1)$ and $\El(\rho_2,\varphi_2,R_2)$ be two elementary formal connections with $p_1=p_2=p$. Then $\El(\rho_1,\varphi_1,R_1)\simeq\El(\rho_2,\varphi_2,R_2)$ if and only if there exist~$\zeta$ with $\zeta^p=1$ and $\lambda_1,\lambda_2\in u\Cu$ satisfying $\lambda'_1(0)\neq0$ and $\lambda'_2(0)\neq0$, such that
$$
\rho_1=\rho_2\circ\lambda_1,\quad \varphi_1\equiv\varphi_2\circ\lambda_1\circ(\lambda_2^{-1}\circ\mu_\zeta\circ\lambda_2)\mod\Cu.\eqno\qed
$$
\end{corollaire}

\begin{remarque}\label{rem:pq}
Let $\El(\rho,\varphi,R)$ be an elementary formal connection and assume that there exists a decomposition $\rho=\rho_1\circ\rho_2$ such that $\varphi=\varphi_1\circ\rho_2$. Then
\[
\rho_+(\cE^\varphi\otimes R)= \rho_{1,+}\rho_{2,+}(\cE^{\varphi_1\circ\rho_2}\otimes R)= \rho_{1,+}(\cE^{\varphi_1}\otimes R_1)
\]
where $R_1=\rho_{2,+}R$ has a regular singularity. In other word, $\El(\rho,\varphi,R)=\El(\rho_1,\varphi_1,R_1)$. Hence it is always possible to choose the presentation of an elementary formal connection in a minimal way, such that $\varphi$ cannot be defined on a ramified sub-covering of $\rho$. We will the say that $\rho$ is \emph{minimal} with respect to~$\varphi$.
\end{remarque}

\subsubsection*{Determinant}
We now give a formula for the determinant of $\El(\rho,\varphi,R)$. Recall that, if $M$ is any finite dimensional $\Clt$-module with connection, the determinant $\det M$ is equipped with a natural connection and, if $A$ is the matrix of $\nabla_{\partial_t}$ in some basis of $M$, then the matrix of $\nabla_{\partial_t}$ acting on $\det M$ is $\Tr A$. If the connection on $M$ is regular, then the connection on $\det M$ is completely determined by the residue of its connection modulo $\ZZ$.

\begin{proposition}\label{prop:detel}
The determinant of of the elementary $\Clt$-module with connection $\El(\rho,\varphi,R)$ is isomorphic to $\cE^{r\Tr\varphi}\otimes\det R\otimes (t^{(p-1)r/2})$, where $p$ is the degree of~$\rho$, $r$ is the rank of $R$, $(t^{(p-1)r/2})$ is the rank one free $\Clt$-module with connection \hbox{$d+[(p-1)r/2]dt/t$}, and $\det R$ is the rank one $\Clu$-module $\det R$ where we change the name of the variable~$u$ to~$t$.
\end{proposition}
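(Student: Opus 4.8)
The plan is to reduce to the model ramification $\rho:u\mapsto u^p$, to split off the regular factor contributed by $R$ by means of the projection formula, and to compute the remaining rank-one factor $\det\rho_+\cE^\varphi$ by the explicit basis of Lemma~\ref{lem:pull-back}.

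First I would reduce. Write $\rho(u)=u^pv(u)$ with $v(0)\neq0$ and put $\lambda(u)=u\,v(u)^{1/p}$; then $\lambda$ is a formal automorphism of the disc with $\rho=\rho_0\circ\lambda$, where $\rho_0:u\mapsto u^p$, so \eqref{eq:rholambda} gives $\El(\rho,\varphi,R)\simeq\El(\rho_0,\varphi\circ\lambda^{-1},R)$. Since multiplication by $\varphi\circ\lambda^{-1}$ on $\Clu$ regarded over $\Clt$ through $\rho_0$ is conjugate, via $g\mapsto g\circ\lambda^{-1}$, to multiplication by $\varphi$ regarded over $\Clt$ through $\rho$, one has $\Tr_{\rho_0}(\varphi\circ\lambda^{-1})=\Tr_\rho\varphi$; and both sides of the statement depend only on $\varphi\bmod\Cu$ (because $\cE^\chi$ does and $\Tr$ sends $\Cu$ into $\Ct$), so we may assume $\rho:u\mapsto u^p$ and, when convenient, $\varphi\in\um\CC[\um]$. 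Next, choosing as in the proof of Lemma~\ref{lem:isomelem} a regular $\Clt$-module $R^{1/p}$ with $\rho^+R^{1/p}\simeq R$, the projection formula \eqref{eq:proj} gives $\El(\rho,\varphi,R)\simeq\rho_+\cE^\varphi\otimes_{\Clt}R^{1/p}$, and the multiplicativity of determinants under tensor product (the connection on a determinant being the trace of the original connection) yields
\[
\det\El(\rho,\varphi,R)\simeq(\det\rho_+\cE^\varphi)^{\otimes r}\otimes(\det R^{1/p})^{\otimes p}.
\]
Here $\det R^{1/p}$ is rank-one regular with monodromy $\det S$, where $S$ is the monodromy of $R^{1/p}$ and $S^p$ that of $R$; hence $(\det R^{1/p})^{\otimes p}$ has monodromy $(\det S)^p=\det(S^p)$, the monodromy of $\det R$, so $(\det R^{1/p})^{\otimes p}\simeq\det R$ (with $u$ renamed~$t$). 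It then remains to prove $\det\rho_+\cE^\varphi\simeq\cE^{\Tr\varphi}\otimes(t^{(p-1)/2})$.

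For this, both sides being of rank one, I would match the matrix of $\nabla_{\partial_t}$ in a basis. Using the $\Clt$-basis $e_k=u^ke$ ($0\le k\le p-1$) of $\rho_+\cE^\varphi$ from Lemma~\ref{lem:pull-back}, the decomposition $u\varphi'(u)=\sum_{j=0}^{p-1}u^j\psi_j(u^p)$, and the relations $\partial_t=(pu^{p-1})^{-1}\partial_u$ and $u^{k-p}e=t^{-1}e_k$, one computes
\[
\partial_te_k=\frac{k}{p}\,t^{-1}e_k+\frac1p\Big(\sum_{j=0}^{p-1-k}t^{-1}\psi_j\,e_{k+j}+\sum_{j=p-k}^{p-1}\psi_j\,e_{k+j-p}\Big),
\]
so the only diagonal contributions are $\frac{k}{p}t^{-1}$ and (from $j=0$) $\frac1p t^{-1}\psi_0$, whence $\Tr(\partial_t)=\frac{p-1}{2t}+\frac{\psi_0}{t}$. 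On the other hand, writing $\varphi=\sum_j a_ju^j$, one gets $\Tr_\rho\varphi=p\sum_m a_{mp}t^m$ and $\psi_0(t)=p\sum_m m\,a_{mp}t^m$, so $(\Tr_\rho\varphi)'(t)=\psi_0(t)/t$; therefore $\Tr(\partial_t)=(\Tr_\rho\varphi)'(t)+\frac{p-1}{2}\cdot\frac1t$ is precisely the matrix of $\nabla_{\partial_t}$ on $\cE^{\Tr\varphi}\otimes(t^{(p-1)/2})$ in its natural basis, which gives the isomorphism. Combining with the earlier display, together with $(\cE^\psi)^{\otimes r}=\cE^{r\psi}$ and $(t^a)^{\otimes r}=(t^{ar})$, yields the proposition.

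All the computations are formal; the one step requiring care is the bookkeeping in $\partial_te_k$ — the split between $k+j<p$ and $k+j\ge p$, and the fact that apart from the $k/p$ term only $j=0$ reaches the diagonal — together with the two Laurent-series identities $\Tr_\rho\varphi=p\sum_m a_{mp}t^m$ and $t(\Tr_\rho\varphi)'=\psi_0$, which are exactly what makes the diagonal of that matrix match $d(\Tr\varphi)+\frac{p-1}{2}\,dt/t$. One could instead obtain $\det\rho_+\cE^\varphi$ from the different of $\rho$, which is where the twist $(t^{(p-1)/2})$ really comes from, but the explicit basis is the shorter route here.
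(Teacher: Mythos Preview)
Your proof is correct and follows essentially the same route as the paper: reduce to $\rho(u)=u^p$, use the explicit $\Clt$-basis $u^ke$ of the pushforward, and read off the trace of the connection matrix to obtain $(\Tr\varphi)'+\frac{p-1}{2}\,\frac{dt}{t}$. The only cosmetic difference is in how the regular part $R$ is handled: the paper reduces to $\rk R=1$ by writing $R$ as a successive extension of rank-one regular modules and carries the residue $\alpha$ through the matrix computation, whereas you factor $R$ out beforehand via the projection formula $\El(\rho,\varphi,R)\simeq\rho_+\cE^\varphi\otimes R^{1/p}$ and the determinant identity $(\det R^{1/p})^{\otimes p}\simeq\det R$; both lead to the same trace calculation.
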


\begin{proof}
As $R$ is a successive extension of rank-one free $\Clu$-modules with regular connection, we can reduce to the case $R$ has rank one. We can also assume that $\rho(u)=u^p$. If $\varphi\in\Clu$, we denote by $\varphi^\inv$ its invariant part with respect to the action of $\ZZ/p\ZZ$.

If $e$ is a $\Clu$-basis of the rank-one module $\cE^\varphi\otimes R$, then $e,ue,\dots,u^{p-1}e$ is a $\Clt$-basis of $\rho_*(\cE^\varphi\otimes R)$. If $\alpha\in\CC$ is the residue of $(R,\nabla)$, then the matrix of the action of $t\partial_t$ in this basis is given by
\[
\frac1p\big[\alpha\Id+\diag(0,\dots,p-1)+u\varphi'_u\cdot{}\big]
\]
where the multiplication by $u\varphi'_u$ has to be interpreted as an operator on the $\Clt$-module $\Clu$. We note that multiplication by $u^\ell$ on $\Clu$ has trace zero except when $\ell$ is a multiple of $p$. Thus $\frac1p\Tr[u\varphi'_u\cdot{}]=\frac1p\Tr([u\varphi'_u\cdot{}]^\inv)= t(\Tr\varphi)'_t$. Therefore, the trace of the matrix of the action of $t\partial_t$ on the basis $e,\dots,u^{p-1}e$ is
\[
\alpha+\frac{p-1}2+t(\Tr\varphi)'_t.\qedhere
\]
\end{proof}

\begin{corollaire}\label{cor:detreg}
If $\textup{slope}\El(\rho,\varphi,R)<1$, then $\det \El(\rho,\varphi,R)$ has a regular singularity.
\end{corollaire}

\begin{proof}
Indeed, $q<p$ implies $\Tr\varphi=0\bmod\Ct$.
\end{proof}

\section{Formal decomposition of a germ of meromorphic connection}\label{sec:turrittin}

\begin{proposition}\label{prop:irred}
Any irreducible finite dimensional $\Clt$-vector space $M$ with connection is isomorphic to $\rho_+(\cE^\varphi\otimes L)$, where $\varphi\in\um\CC[\um]$, $\rho:u\mto t=u^p$ has degree $p\geq1$ and is minimal with respect to $\varphi$ (\cf Remark \ref{rem:pq}), and $L$ is some rank one $\Clu$-vector space with regular connection.
\end{proposition}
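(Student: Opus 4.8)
The plan is to start from the classical Turrittin–Levelt theorem and show that irreducibility forces all the data to be in the special shape claimed. First I would invoke the classical Turrittin–Levelt decomposition: there exists a ramification $\rho_0:u_0\mto t=u_0^{p_0}$ (with $p_0\geq1$) such that $\rho_0^+M\simeq\bigoplus_i \cE^{\varphi_i}\otimes R_i$ with $\varphi_i\in u_0^{-1}\CC[u_0^{-1}]$ and $R_i$ regular. Since push-forward is exact and $\rho_{0,+}\rho_0^+M\simeq M\otimes\rho_{0,+}\bun$ contains $M$ as a direct summand (the normalized trace splits the unit of adjunction $M\to\rho_{0,+}\rho_0^+M$), any indecomposable summand of $M$ is a summand of some $\rho_{0,+}(\cE^{\varphi_i}\otimes R_i)=\El(\rho_0,\varphi_i,R_i)$. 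By Remark \ref{rem:pq} we may assume $\rho_0$ is minimal with respect to $\varphi_i$; write $p$ for its degree, $\varphi=\varphi_i$, and reduce to showing: an irreducible $M$ that is a summand of $\El(\rho,\varphi,R)$ (with $\rho:u\mto u^p$ minimal w.r.t.\ $\varphi$) is itself isomorphic to $\rho_+(\cE^\varphi\otimes L)$ for some rank-one regular $L$.

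Next I would reduce $R$ to rank one. Since $R$ is a successive extension of rank-one regular connections, $\El(\rho,\varphi,R)$ is a successive extension of the $\El(\rho,\varphi,L_j)$ for the rank-one subquotients $L_j$ of $R$; an irreducible summand of an iterated extension is a summand (indeed subquotient) of one of the pieces, so we may assume $R=L$ has rank one. It remains to show $\rho_+(\cE^\varphi\otimes L)$ is already irreducible when $\rho$ is minimal with respect to $\varphi$. For this I would apply $\rho^+$ and use the projection-formula computation from the proof of Lemma \ref{lem:isomelem}: choosing $L^{1/p}$ with $\rho^+L^{1/p}=L$, one gets $\rho^+\rho_+(\cE^\varphi\otimes L)=(\rho^+\rho_+\cE^\varphi)\otimes L^{1/p}\simeq\bigoplus_{\zeta^p=1}\cE^{\varphi\circ\mu_\zeta}\otimes L^{1/p}$ by Lemma \ref{lem:pull-back}. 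Minimality of $\rho$ with respect to $\varphi$ means precisely that the $p$ series $\varphi\circ\mu_\zeta$ are pairwise distinct mod $\CC[u]$ (if two coincided, $\varphi$ would factor through a proper sub-covering). Hence $\rho^+\rho_+(\cE^\varphi\otimes L)$ decomposes into $p$ pairwise non-isomorphic rank-one pieces permuted transitively by the Galois group $\ZZ/p\ZZ$ of $\rho$.

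Finally, from this I would deduce irreducibility of $\rho_+(\cE^\varphi\otimes L)$ by a Galois-descent/counting argument: if $N\subsetneq\rho_+(\cE^\varphi\otimes L)$ were a nonzero proper subobject, then $\rho^+N$ would be a nonzero proper Galois-stable subobject of $\bigoplus_{\zeta^p=1}\cE^{\varphi\circ\mu_\zeta}\otimes L^{1/p}$; since the summands are pairwise non-isomorphic, $\rho^+N$ must be a sub-sum indexed by a $\ZZ/p\ZZ$-stable subset of the roots of unity, but that action is transitive, so $\rho^+N$ is $0$ or everything, a contradiction (here one uses that $\rho^+$ is faithful and exact, so $\rho^+N=0$ implies $N=0$ and $\rho^+N=\rho^+\rho_+(\cE^\varphi\otimes L)$ implies $N=\rho_+(\cE^\varphi\otimes L)$ by comparing ranks, $\rk\rho^+N=p\cdot\rk N$). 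The main obstacle is the bookkeeping in this last step — namely justifying cleanly that the only Galois-stable subobjects of a multiplicity-free sum of characters permuted transitively are the trivial ones, and transferring ``proper subobject of $M$'' through adjunction without losing exactness; once the multiplicity-freeness coming from minimality is in hand, the rest is formal.
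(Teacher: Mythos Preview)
Your argument is correct and reaches the same conclusion, but the route differs from the paper's. The paper works directly with $\rho^+M$: after Turrittin--Levelt, irreducibility of $M$ together with the natural $\ZZ/p\ZZ$-action forces the decomposition of $\rho^+M$ to be a single Galois orbit $\bigoplus_{\zeta}\cE^{\varphi\circ\mu_\zeta}\otimes R_{\varphi\circ\mu_\zeta}$; the Galois action identifies all the regular pieces with one $R$, one chooses $R^{1/p}$, recognizes $\rho^+M\simeq\rho^+(\rho_+\cE^\varphi\otimes R^{1/p})$, and recovers $M\simeq\rho_+(\cE^\varphi\otimes R)$ by taking $\ZZ/p\ZZ$-invariants; irreducibility then forces $R$ to have rank one. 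In particular the paper establishes the forward direction \emph{without} first knowing that $\rho_+(\cE^\varphi\otimes L)$ is irreducible---that converse is proved afterwards by the same Galois argument you give. Your route instead embeds $M$ as a summand of $\rho_+\rho^+M$ via the trace splitting, uses Krull--Schmidt and Jordan--H\"older to reduce to rank-one $L$, and then \emph{relies} on the irreducibility of $\El(\rho,\varphi,L)$ to pin $M$ down; so your forward implication logically depends on the converse. Both are fine: the paper's approach is more constructive (it exhibits the isomorphism via descent), yours more categorical. Two small corrections: after applying $\rho^+$ the tensor factor should be $L$ (a $\Clu$-module), not $L^{1/p}$ (which lives over $\Clt$); and ``an irreducible summand of an iterated extension is a summand of one of the pieces'' is not literally true---what you actually need and use is that $M$ is an irreducible \emph{subquotient}, hence a Jordan--H\"older factor, hence isomorphic to one of the $\El(\rho,\varphi,L_j)$ once those are shown irreducible.
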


\begin{proof}
Assume that $M$ is irreducible. According to the classical Turrittin-Levelt theorem, we can choose $\rho:u\mto t=u^p$ such that $\rho^+M$ decomposes as $\oplus_\varphi(\cE^\varphi\otimes R_\varphi)$. We have a natural action of $\ZZ/p\ZZ$ on $\rho^+M$, and~$M$ is recovered as the invariant subspace. Therefore, by irreducibility, there exists $\varphi\in\Clu$ such that $\rho^+M\simeq\oplus_{\zeta^p=1}(\cE^{\varphi\circ\mu_\zeta}\otimes R_{\varphi\circ\mu_\zeta})$ and we can assume that $p$ is minimal, so that for any $\zeta\neq1$ with $\zeta^p=1$, we have $\varphi\circ\mu_\zeta\not\equiv\varphi\bmod\Cu$. The $\ZZ/p\ZZ$-action induces isomorphisms $a_\zeta:R_\varphi\to R_{\varphi\circ\mu_\zeta}$ which compose themselves in the right way. Let us set $R=R_\varphi$ and let us choose $R^{1/p}$ as in the proof of Lemma \ref{lem:isomelem}. Then $\rho^+M\simeq\rho^+(\rho_+\cE^\varphi\otimes R^{1/p})$ and, taking the $\ZZ/p\ZZ$-invariant part, we get $M\simeq\rho_+\cE^\varphi\otimes R^{1/p}$. Still by irreducibility, $R^{1/p}$, hence $R$, has rank one.

On the other hand, such a $\rho_+(\cE^\varphi\otimes L)$ is irreducible: indeed, $\rho^+\rho_+(\cE^\varphi\otimes L)$ decomposes as the direct sum of rank-one non-isomorphic connections, and has no non-trivial $\ZZ/p\ZZ$-invariant submodule.
\end{proof}

\begin{remarque}
There is no unique way, in general, to write down an irreducible meromorphic connection: either we choose the presentation $\rho_+(\cE^\varphi\otimes_{\Clu} L)$, and $L$ is uniquely defined up to isomorphism, but $\varphi$ could be changed into $\varphi\circ\mu_\zeta$, or we choose $\rho_+(\cE^\varphi)\otimes_{\Clt} L^{1/p}$, and $L^{1/p}$ is not uniquely defined. We will call $\rho_+\cE^\varphi$ the exponential irreducibility type of the irreducible meromorphic connection.
\end{remarque}

\begin{corollaire}[Refined Turrittin-Levelt, \cf \cite{B-E04}]\label{cor:refinedTL}
Any finite dimensional $\Clt$-vector space $M$ with connection can be written in a unique way as a direct sum $\bigoplus\El(\rho,\varphi,R)$, in such a way that each $\rho_+\cE^\varphi$ is irreducible and no two $\rho_+\cE^\varphi$ are isomorphic.
\end{corollaire}

\begin{proof}
Fix an irreducible $\Clt$-module $I$ with connection and consider in~$M$ the $I$-typical component $M_I$ defined as the maximal submodule such that all irreducible sub-quotients are isomorphic to $I\otimes L$ for some rank one regular $\Clt$-module with connection~$L$. It will be convenient to choose $I$ as $\rho_+\cE^\varphi$ for some suitable~$\rho$ and~$\varphi$ as in Proposition \ref{prop:irred}. Then, if $I_1\not\simeq I_2$, there is no non-zero morphism from $M_{I_1}$ to $M_{I_2}$, and we get the decomposition $M=\oplus_IM_I$. On the other hand, each $M_I$ is isomorphic to $\rho_+(\cE^\varphi\otimes R)$ for some regular $R$.
\end{proof}

\begin{corollaire}\label{cor:Evphi}
Let $M$ be a finite dimensional $\Clt$-vector space with connection. Then $M$ is isomorphic to an elementary module $\El(\rho,\varphi,R)$ if and only if $\rho^+M$ is isomorphic to $\rho^+\El(\rho,\varphi,R)=(\rho^+\rho_+\cE^\varphi)\otimes R$.\qed
\end{corollaire}

\begin{corollaire}\label{cor:isommin}
Let $\El(\rho_1,\varphi_1,R_1)$ and $\El(\rho_2,\varphi_2,R_2)$ be two elementary connections written in a minimal way (\cf Remark \ref{rem:pq}). Then $\El(\rho_1,\varphi_1,R_1)\simeq\El(\rho_2,\varphi_2,R_2)$ if and only if $p_1=p_2$ and the condition of Corollary~\ref{cor:isompp} applies.
\end{corollaire}

\begin{proof}
Any irreducible sub-quotient of $\El(\rho_1,\varphi_1,R_1)$ takes the form $\El(\rho_1,\varphi_1,L_1)$ where $L_1$ is a rank one sub-quotient of $R_1$. A similar result holds for $\El(\rho_2,\varphi_2,R_2)$. Each such sub-quotient has rank $p_1$ (\resp $p_2$). It follows that $p_1=p_2$. We can then apply Corollary~\ref{cor:isompp}.
\end{proof}

\begin{remarque}[Extension to $\wh\cD$-modules]\label{rem:reg}
Let us denote by $\wh\cD$ the ring of differential operators with coefficients in $\Ct$. Then any given holonomic $\wh\cD$-module $M$ can be decomposed as $M_\reg\oplus M_\irr$, where $M_\reg$ is a regular holonomic $\wh\cD$-module and $M_\irr$ is purely irregular. Then $M_\irr$ is a $\Clt$-module with connection (although $M_\reg$ may be not), and has the refined Turrittin-Levelt decomposition of Corollary \ref{cor:refinedTL}, where all~$\varphi$ are non-zero in $\Clu/\Cu$.
\end{remarque}

\subsubsection*{Tensor product}
Given two elementary formal connections $\El([u\mto u^{p_1}],\varphi_1,R_1)$ and $\El([v\mto v^{p_2}],\varphi_2,R_2)$, we set $d=\gcd(p_1,p_2)$, $p'_1=p_1/d$ and $p'_2=p_2/d$. We have the following diagram:
\[
\xymatrix{
\cbbullet\ar[r]^-{\wt \rho_1}\ar[d]_{\wt\rho_2}\ar[dr]^-{\!\!\rho'}& \cbbullet\ar[d]^{\rho'_2}\ar@/^1pc/[rdd]^-{\rho_2}\\
\cbbullet\ar[r]_{\rho'_1}\ar@/_1pc/[rrd]_-{\rho_1}&\cbbullet\ar[rd]^{\!\!\rho_d}\\
&&\cbbullet
}
\]
where the dots represent formal discs, $\rho'_1(u)=u^{p'_1}$, $\wt\rho_1(w)=w^{p'_1}$, \etc We then set
\begin{equation}\label{eq:formulestens}
\begin{split}
\rho(w)&=w^{p_1p_2/d},\\
\varphi^{(k)}&=\varphi_1(w^{p'_2})+\varphi_2([e^{2\pi ikd/p_1p_2}w]^{p'_1})\quad (k=0,\dots,d-1),\\
R&=\wt\rho_2^+R_1\otimes\wt\rho_1^+R_2.
\end{split}
\end{equation}

\begin{proposition}\label{prop:tens}
With this notation,
\[
\El([u\mapsto u^{p_1}],\varphi_1,R_1)\otimes \El([v\mapsto v^{p_2}],\varphi_2,R_2)\simeq\tbigoplus_{k=0}^d\El(\rho,\varphi^{(k)},R).
\]
\end{proposition}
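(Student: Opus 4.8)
The plan is to apply the projection formula \eqref{eq:proj} twice, separated by a base change computation guided by the diagram above. First I would apply \eqref{eq:proj} to $\rho_1:u\mto u^{p_1}$, with $N=\cE^{\varphi_1}\otimes R_1$ on the $u$-disc and $M=\El([v\mto v^{p_2}],\varphi_2,R_2)$ on the $t$-disc, obtaining
\[
\El([u\mto u^{p_1}],\varphi_1,R_1)\otimes\El([v\mto v^{p_2}],\varphi_2,R_2)\simeq\rho_{1,+}\big(\cE^{\varphi_1}\otimes R_1\otimes\rho_1^+\rho_{2,+}(\cE^{\varphi_2}\otimes R_2)\big).
\]
So everything reduces to computing the pull-back $\rho_1^+\rho_{2,+}(\cE^{\varphi_2}\otimes R_2)$, followed by a second, ``inverse'' use of \eqref{eq:proj}.

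Second, I would compute this pull-back by base change. The fibre product of the $u$-disc and the $v$-disc over the $t$-disc is not normal in general, but its normalization is a disjoint union of $d$ copies $W_0,\dots,W_{d-1}$ of the $w$-disc, on which the projection to the $u$-disc is $\wt\rho_2:w\mto w^{p'_2}$ (the same for every $k$) and the projection to the $v$-disc is $w\mto[e^{2\pi ikd/p_1p_2}w]^{p'_1}$, the root of unity being the one imposed, up to a $p'_2$-th root of unity that does not affect the final isomorphism class, by the relation $u^{p_1}=v^{p_2}$. The base change formula then expresses $\rho_1^+\rho_{2,+}$ as the sum, over these components, of the corresponding ``push-forward of pull-back'' functors; this formula extends Lemma \ref{lem:pull-back} to unequal ramifications and to arbitrary connections, and, in order to stay within the tools already set up, can be derived from that lemma using the factorizations $\rho_1=\rho_d\circ\rho'_1$ and $\rho_2=\rho_d\circ\rho'_2$ of the diagram. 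Using in addition that the pull-back of the regular connection $R_2$ by a map of the shape $w\mto[\zeta w]^{p'_1}$, with $\zeta$ a root of unity, is isomorphic to $\wt\rho_1^+R_2$, I would obtain
\[
\rho_1^+\rho_{2,+}(\cE^{\varphi_2}\otimes R_2)\simeq\bigoplus_{k=0}^{d-1}\wt\rho_{2,+}\big(\cE^{\varphi_2([e^{2\pi ikd/p_1p_2}w]^{p'_1})}\otimes\wt\rho_1^+R_2\big).
\]

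Third, I would substitute this into the first identity, distribute the direct sum, and apply \eqref{eq:proj} once more, now to $\wt\rho_2:w\mto w^{p'_2}$, so as to absorb $\cE^{\varphi_1}\otimes R_1$ into the push-forward. Since $\wt\rho_2^+\cE^{\varphi_1}=\cE^{\varphi_1(w^{p'_2})}$, the $k$-th summand becomes
\[
\rho_{1,+}\wt\rho_{2,+}\big(\cE^{\varphi_1(w^{p'_2})+\varphi_2([e^{2\pi ikd/p_1p_2}w]^{p'_1})}\otimes\wt\rho_2^+R_1\otimes\wt\rho_1^+R_2\big)=\rho_{1,+}\wt\rho_{2,+}\big(\cE^{\varphi^{(k)}}\otimes R\big),
\]
with $\varphi^{(k)}$ and $R$ exactly as in \eqref{eq:formulestens}. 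As $\rho_1\circ\wt\rho_2:w\mto w^{p_1p_2/d}=\rho(w)$, the two push-forwards compose into $\rho_+$, and summing over $k$ yields $\bigoplus_{k=0}^{d-1}\El(\rho,\varphi^{(k)},R)$, which is the asserted decomposition (the upper summation index should read $d-1$: there are $d$ summands).

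I expect the one genuinely delicate point to be the second step: pinning down the normalization of the fibre product together with the precise roots of unity on its branches, so that the twist $e^{2\pi ikd/p_1p_2}$ comes out exactly as in \eqref{eq:formulestens} rather than up to some unspecified root of unity, and checking that base change is compatible with the connections (a routine chain-rule verification, which nonetheless has to be carried out, or else organized via the reduction to Lemma \ref{lem:pull-back}). Everything else is formal bookkeeping with \eqref{eq:proj} and with the elementary identities for pull-backs of $\cE^\varphi$ and of regular connections.
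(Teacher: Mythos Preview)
Your plan is correct and carries through. The organization, however, differs from the paper's proof, so a short comparison is in order.

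The paper first strips off the regular factors by choosing $p_i$th roots $R_i^{1/p_i}$ and using the projection formula to write $\El(\rho_i,\varphi_i,R_i)=\rho_{i,+}\cE^{\varphi_i}\otimes R_i^{1/p_i}$; this reduces the statement to $\rho_{1,+}\cE^{\varphi_1}\otimes\rho_{2,+}\cE^{\varphi_2}\simeq\bigoplus_k\rho_+\cE^{\varphi^{(k)}}$. It then treats the coprime case $d=1$ by pulling back along $\rho$, applying Lemma~\ref{lem:pull-back} to each factor, reindexing via $\{\zeta_1^{p_1}=1\}\times\{\zeta_2^{p_2}=1\}\simeq\{\zeta^{p_1p_2}=1\}$, and taking $\ZZ/p_1p_2\ZZ$-invariants. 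The general case is reduced to $d=1$ by one application of the projection formula along $\rho_d$ together with Remark~\ref{rem:partialdecomp}.

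Your route is a direct base-change argument: projection formula along $\rho_1$, then the identification of $\rho_1^+\rho_{2,+}$ with a sum of $\wt\rho_{2,+}\circ(\text{pull-back})$ over the $d$ branches of the normalized fibre product, then projection formula along $\wt\rho_2$. This keeps the regular factors $R_i$ in the computation throughout (no need to choose $R_i^{1/p_i}$), and the $d=1$ case is not singled out. The price is that the base-change isomorphism for connections must be checked, which you rightly flag; as you note, it can be obtained from Lemma~\ref{lem:pull-back} and Remark~\ref{rem:partialdecomp} via the factorizations in the diagram, so no genuinely new ingredient is needed. Both arguments ultimately rest on the same tools (the projection formula and Lemma~\ref{lem:pull-back}/Remark~\ref{rem:partialdecomp}); yours packages them more geometrically, the paper's more computationally.
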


\begin{proof}
As $\rho^+(R_1^{1/p_1}\otimes R_2^{1/p_2})\simeq \wt\rho_2^+R_1\otimes\wt\rho_1^+R_2$, we are reduced to finding an isomorphism
\[
\rho_{1,+}\cE^{\varphi_1}\otimes\rho_{2,+}\cE^{\varphi_2}\simeq\tbigoplus_{k=0}^{d-1}\rho_+\cE^{\varphi^{(k)}}.
\]

If $d=1$ (so that $p'_1=p_1$ and $p'_2=p_2$), we have
\[
\rho^+(\rho_{1,+}\cE^{\varphi_1}\otimes\rho_{2,+}\cE^{\varphi_2})=
\hspace*{-1mm}
\tbigoplus_{\substack{\zeta_1^{p_1}=1\\\zeta_2^{p_2}=1}}
\hspace*{-1mm}
\cE^{\varphi_1(\zeta_1w^{p_2})+\varphi_2(\zeta_2w^{p_1})}=
\hspace*{-1mm}
\tbigoplus_{\substack{\zeta_1^{p_1}=1\\\zeta_2^{p_2}=1}}
\hspace*{-1mm}
\cE^{\varphi(\zeta_1\zeta_2w)}=
\hspace*{-1mm}
\tbigoplus_{\zeta^{p_1p_2}=1}
\hspace*{-2mm}
\cE^{\varphi\circ\mu_\zeta},
\]
and we get the desired isomorphism by taking the $\ZZ/p_1p_2\ZZ$-invariant part.

Otherwise, we have $\rho_{1,+}\cE^{\varphi_1}\otimes\rho_{2,+}\cE^{\varphi_2}\simeq\rho_{d,+}(\rho'_{1,+}\cE^{\varphi_1}\otimes\rho_d^+\rho_{2,+}\cE^{\varphi_2})$ and, using Remark \ref{rem:partialdecomp}, this is $\bigoplus_{k=0}^{d-1}\rho_{d,+}(\rho'_{1,+}\cE^{\varphi_1}\otimes\rho'_{2,+}\cE^{\varphi_2\circ\mu_{\exp2\pi ik/p_2}})$. We deduce then from the $d=1$-case that 
\[
\rho'_{1,+}\cE^{\varphi_1}\otimes\rho_d^+\rho'_{2,+}\cE^{\varphi_2}\simeq\tbigoplus_{k=0}^{d-1}\rho'_+\cE^{\varphi^{(k)}},
\]
hence the result by applying $\rho_{d,+}$.
\end{proof}

\begin{remarque}\label{rem:tensor}
Using the notation $(p,q,r)$ as after Definition \ref{def:elem}, we get
\begin{align*}
p&=p_1p_2/\gcd(p_1,p_2),\\
q^{(k)}&\leq \max\{q_1p_2/\gcd(p_1,p_2),q_2p_1/\gcd(p_1,p_2)\},\\
r&=r_1r_2.
\end{align*}
Therefore,
\begin{align*}
\text{slope}\El(\rho,\varphi^{(k)},R)&\leq\max\{\text{slope}\El(\rho_1,\varphi_1,R_1),\text{slope}\El(\rho_2,\varphi_2,R_2)\},
\\
\irr_0\El(\rho,\varphi^{(k)},R)&\leq\max\big\{\irr_0\El(\rho_1,\varphi_1,R_1)\cdot\rg\El(\rho_2,\varphi_2,R_2),\\
&\hspace*{3cm}\irr_0\El(\rho_2,\varphi_2,R_2)\cdot\rg\El(\rho_1,\varphi_1,R_1)\big\},
\\
\rg\El(\rho,\varphi^{(k)},R)&=\rg\El(\rho_1,\varphi_1,R_1)\cdot\rg\El(\rho_2,\varphi_2,R_2)/\gcd(p_1,p_2).
\end{align*}
\end{remarque}

\subsubsection*{Dual}
Using that $\rho_+(N^*)\simeq(\rho_+N)^*$, we get
\begin{equation}\label{eq:dual}
\El(\rho,\varphi,R)^*\simeq\El(\rho,-\varphi,R^*).
\end{equation}

\subsubsection*{Hom}
As a consequence of Proposition \ref{prop:tens} and \eqref{eq:dual} we get:
\begin{equation}\label{eq:Hom}
\begin{split}
\Hom_{\Clt}\big(\El([u\mapsto u^{p_1}],&\varphi_1,R_1),\El([v\mapsto v^{p_2}],\varphi_2,R_2)\big)\\
&\simeq\El([u\mapsto u^{p_1}],-\varphi_1,R_1^*)\otimes\El([v\mapsto v^{p_2}],\varphi_2,R_2)\\
&\simeq\tbigoplus_{k=0}^{d-1}\El([w\mapsto w^{p_1p_2/d}],\varphi^{(k)},R)
\end{split}
\end{equation}
with
\begin{equation}\label{eq:Homsetting}
\varphi^{(k)}(w)=\varphi_2(w^{p'_1})-\varphi_1([e^{2\pi ikd/p_1p_2}w]^{p'_2}),\quad R=\wt\rho_2^+R_1^*\otimes\wt\rho_1^+R_2.
\end{equation}

Applying this formula to $\End_{\Clt}(\El[u\mto u^p],\varphi,R)$ gives
\[
\End_{\Clt}(\El([u\mapsto u^p],\varphi,R)\simeq\tbigoplus_{\zeta^p=1}\El([u\mapsto u^p],\varphi-\varphi\circ\mu_\zeta,\End_{\Clu}(R)).
\]
If we assume that $p$ is minimal with respect to $\varphi$, \ie $\varphi\not\equiv\varphi\circ\mu_\zeta\bmod\Cu$ if $\zeta\neq1$, then we obtain
\[
\End_{\Clt}\big(\El(\rho,\varphi,R)\big)_\reg\simeq\rho_+\End_{\Clu}(R).
\]

On the other hand, in \eqref{eq:Hom}, let us assume that $p_i$ is minimal with respect to~$\varphi_i$ ($i=1,2$) and that $\rho_{1,+}\cE^{\varphi_1}\not\simeq\rho_{2,+}\cE^{\varphi_2}$. We will then show that
\[
\Hom_{\Clt}\big(\El(\rho_1,\varphi_1,R_1),\El(\rho_2,\varphi_2,R_2)\big)_\reg=0.
\]
Indeed, we can assume that $\rho_1(u)=u^{p_1}$ and $\rho_2(v)=v^{p_2}$. Moreover, it is enough to prove the assertion when $R_1=\bun$ and $R_2=\bun$. In such a case, if $\Hom_{\Clt}(\rho_{1,+}\cE^{\varphi_1},\rho_{2,+}\cE^{\varphi_2})_\reg\neq0$, \eqref{eq:Hom} would imply that it is isomorphic to a direct sum of terms $\rho_+\bun$. In particular, it would contain a horizontal section, in contradiction with the assumption.

Let now $M$ be a $\Clt$-vector space with connection, let us denote by $\bigoplus_i\El(\rho_i,\varphi_i,R_i)$ its refined Turrittin-Levelt decomposition, as in Corollary \ref{cor:refinedTL} and let us assume that the minimality condition holds for each $(\rho_i,\varphi_i)$. We then conclude:
\begin{equation}
\label{eq:endreg}
(\End_{\Clt}M)_\reg\simeq\tbigoplus_i\rho_{i,+}\End(R_i).
\end{equation}

\begin{remarque}[Centralizers]\label{rem:Zrho}
Let $R$ be a regular $\Clu$-module corresponding to a vector space with monodromy $(\psi_uR,T)$. If $\rho\in u\Cu$ has valuation equal to $p\geq1$, then $\rho_+R$ corresponds to $(\psi_uR\otimes\CC^p,\rho_+T)$, where $\rho_+T=T^{1/p}\otimes \rP_p$ for some choice of a $p$th root of $T$ and where $\rP_p$ is the cyclic permutation matrix on $\CC^p$. The following is easy:
\[
\dim \ker(\rho_+T-\Id)=p\dim \ker(T-\Id).
\]
Applying this equality to $\End_{\Clu}(R)$ and denoting by $\rZ(T)$ the centralizer of $T$ ($=\ker(\Ad(T)-\Id)$), we obtain
\[
\dim\ker(\rho_+\Ad(T)-\Id)=p\dim \ker(\Ad(T)-\Id)=p\dim\rZ(T).
\]
\end{remarque}

\section{Direct images of exponentially twisted regular $\cD$-modules}\label{sec:exptwist}

In this section we explain an improvement, in a particular case, of the main result of \cite{Roucairol07}. Let $\Delta$ be a disc centered at the origin in $\CC$ with coordinate $t$ and let~$\PP^1$ be the projective line with affine chart $\Afu$ having coordinate $x$. We denote by~$\infty$ the point with coordinate $x=\infty$. Let $\cM$ be a \emph{regular} holonomic $\cD_{\Delta\times\PP^1}$-module. In the following, we always assume that $\cM$ is equal to its localized module along the divisor $[\Delta\times\{\infty\}]\cup[\{0\}\times\PP^1]$ (the adjunction of this second component will not affect the computation of the irregularity we are interested to compute). Let \hbox{$t:\Delta\times\PP^1\to \Delta$} be the projection and let $\cE^x$ denotes the free rank-one $\cO_{\Delta\times\PP^1}(*\infty)$-module with connection $d+dx$. The main result of \cite{Roucairol07} gives much information on the formal irregular part $\cH^0t_+(\cE^x\otimes\cM)^\irr$ of $\cH^0t_+(\cE^x\otimes\cM)$ at the origin of $\Delta$. Let us recall the notation of \loccit.

The singular support of $\cM$ away from $\{0\}\times\PP^1$ (\ie the locus where $\cM$ is not locally a vector bundle with flat connection away from $\{0\}\times\PP^1$) is a germ along $\{0\}\times\PP^1$ of a analytic closed analytic set of $\Delta\times\PP^1$ of dimension $\leq1$. Therefore, if~$\Delta$ is small enough, it is a union of germs, at a finite number of points of $\{0\}\times\PP^1$, of possibly singular complex analytic curves (distinct from $\{0\}\times\PP^1$). We will denote by $S$ the germ at $(0,\infty)\in \Delta\times\PP^1$ of the singular support of $\cM$ away from $\{0\}\times\PP^1$. Let us denote by $y=1/x$ the coordinate at $\infty$ on $\PP^1$. We first make the following assumption:

\begin{assumption}\label{assumpt:irred}
The germ of $S$ at $(0,\infty)$ is irreducible.
\end{assumption}

We fix a Puiseux parametrization of $S$ as $t=u^{p}$, $\mu(u)y=u^{q}$ where $\mu$ is holomorphic and $\mu(0)\neq0$. We assume that $p$ is minimal, in the sense that one cannot find a Puiseux parametrization of $S$ with a smaller $p$. The inverse image $\rho^{-1}S$ of $S$ by \hbox{$\rho:(u,y)\mto (u^p,y)$} consists of $p$ distinct smooth curves $S_\zeta$ having equation $\zeta^{-q}\mu(\zeta u)y=u^q$, where $\zeta$ varies among the $p$th roots of the unity. In particular, the restriction of~$\rho$ to any of the curves $S_\zeta$ is isomorphic to the normalization $\nu:\wt S\to S$.

We define $\alpha$ to be the polar part of $u^{-q}\mu(u)$, and $\delta$ to be its holomorphic part. If $\zeta$ varies among the $p$th roots of the unity, the functions $\zeta^{-q}\mu(\zeta u)$ are thus pairwise distinct. We will make the more restrictive following assumption, which is satisfied if $(p,q)=1$:

\begin{assumption}\label{assumpt:irred2}
The polar parts $\alpha(\zeta u)$ are pairwise distinct when $\zeta$ varies in the set of $p$th roots of the unity.
\end{assumption}

Let us consider the de~Rham complex $\DR\cM$. Fixing a local equation $h$ of $S$ at $(0,\infty)$, we can define a constructible complex on $S$ by taking the vanishing cycle complex $\phi_{h}\DR\cM$, whose natural monodromy around $h=0$ we forget. Up to a shift (depending on the convention we took for perverse sheaves and vanishing cycle functor), the restriction of this complex to $S^*\defin S\moins\{(0,\infty)\}$ is a local system. Let $\nu:\wt S\to S$ be the normalization. The previous local system defines thus a vector bundle $R$ with meromorphic connection having a regular singularity at the origin of~$\wt S$.

\begin{theoreme}\label{th:rouc}
If $S$ is irreducible at $(0,\infty)$ and if Assumption \ref{assumpt:irred2} is satisfied, the formal irregular part $\cH^0t_+(\cE^x\otimes\cM)^\irr$ at the origin of~$\Delta$ is isomorphic to $\El(t\circ\nu,x\circ\nu,R)$.
\end{theoreme}

In \cite{Roucairol07}, C\ptbl Roucairol obtains a similar result, but the regular part $R$ is not computed so precisely, as only the characteristic polynomial of the monodromy is determined, not its Jordan structure. We will show below how the argument of \loccit.\ can be modified to get the more precise result of Theorem \ref{th:rouc}.

\begin{proof}
Let us start with the following situation. We consider a regular holonomic $\cD_{\Delta\times\PP^1}$-module (localized along $[\Delta\times\{\infty\}]\cup[\{0\}\times\PP^1]$ as above) with singular support consisting, near $(0,\infty)$, of a finite family of smooth curves $S_i$ having local equations $\mu_i(t)y=t^{q_i}$, with $q_i\geq1$, $\mu_i$ holomorphic and $\mu_i(0)\neq0$.
\begin{center}
\setlength{\unitlength}{.3mm}
\begin{picture}(80,80)(0,0)
\qbezier(0,75)(40,5)(80,75)
\qbezier(0,65)(40,15)(80,65)
\qbezier(0,50)(40,30)(80,50)
\qbezier(0,5)(40,75)(80,5)
\qbezier(0,15)(40,65)(80,15)
\qbezier(0,30)(40,50)(80,30)
\put(20,20){\line(1,1){40}}
\put(20,70){\line(2,-3){40}}
\thicklines
\put(0,40){\line(1,0){80}}
\put(40,0){\line(0,1){80}}
\put(43,75){$y$}
\end{picture}
\end{center}

We denote by $\alpha_i\in\tm\CC[\tm]$ the polar part of $t^{-q_i}\mu_i$ and by $\delta_i$ its holomorphic part. We define the meromorphic bundle with connection $R_i$ on $S_i$ by using the same procedure as above. We thus get a vector space with an automorphism $(\psi_tR_i,T_i)$.

Let $\alpha$ be any nonzero element of $\tm\CC[\tm]$ and let $q$ the order of its pole, so that, if we set $\mu(t)=t^q\alpha(t)$, we have $\mu(0)\neq0$. Let us note that $\psi_t(\cM\otimes\nobreak\cE^{x-\alpha})$ is supported at $(0,\infty)$ (near $x^o$ at finite distance, $\cM\otimes\cE^{x-\alpha})\simeq\cM\otimes\cE^{-\alpha})$ and the local $V$-filtration is easily seen to be constant). Therefore, $\psi_t(\cM\otimes\cE^{x-\alpha})$ being holonomic (see \hbox{\eg \cite{M-S86b}}) and supported on a point, it is equivalent (through Kashiwara's equivalence) to a finite dimensional vector space with an automorphism~$T$. We can analyze it by working in the~$y$ coordinate only. We now characterize this object:

\begin{proposition}[Improvement of \cite{Roucairol07}]\label{prop:rouc}
In such a situation, let us moreover assume that the functions $\alpha_i(t)+\delta_i(0)$ are pairwise distinct. Then, for any $\alpha\in\tm\CC[\tm]$,
\[
(\psi_t(\cM\otimes\cE^{-\alpha+1/y}),T)\simeq\tbigoplus_{i\mid\alpha_i=\alpha}(\psi_tR_i,T_i).
\]
\end{proposition}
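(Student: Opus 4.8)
The plan is to reduce the computation of $\psi_t(\cM\otimes\cE^{-\alpha+1/y})$ to a sum of purely local contributions coming from those branches $S_i$ whose polar part matches $\alpha$, by a sequence of simplifications each of which is controlled by Proposition \ref{prop:bibi}. First I would work entirely in the chart $(t,y)$ near $(0,\infty)$, since $\psi_t(\cM\otimes\cE^{x-\alpha})$ is already known to be supported at $(0,\infty)$; the kernel $\cE^{-\alpha+1/y}=\cE^{x-\alpha}$ (recall $x=1/y$). The module $\cM$ is regular holonomic and localized along the two divisors, so away from $\{0\}\times\PP^1$ it is a meromorphic bundle with regular connection whose singular locus is the union of the smooth curves $S_i=\{\mu_i(t)y=t^{q_i}\}$ together with $\{y=0\}$ and $\{t=0\}$. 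The idea is that $\cE^{x-\alpha}=\cE^{1/y-\alpha}$ has an irregular singularity concentrated along $\{y=0\}$, and tensoring with $\cM$ and taking $\psi_t$ kills everything except where the ``phase'' $1/y-\alpha(t)$ interacts resonantly with the regular singularities of $\cM$.

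The key steps, in order. (1) Resolve the pencil: blow up $(0,\infty)$ (and iterate as needed) to obtain a proper modification $\pi:X'\to\Delta\times\PP^1$ on which the pullback $f'=t\circ\pi$ of $t$ is locally of the normal-crossing monomial form $x_1^{m_1}x_2^{m_2}$ treated before Proposition \ref{prop:bibi}, and on which $\pi^*\cM$ and the phase become a sum of elementary pieces $\cE^{\lambda/x^n}\otimes\cR$ with $\cR$ regular. Use \eqref{eq:psipi} to push $\psi_{f'}$ back down: $\psi_t\pi^0_+(\pi^*\cM\otimes\pi^*\cE^{x-\alpha})\simeq\cH^0\pi_{|D',+}\psi_{f'}(\cdots)$, and note $\pi^0_+$ of the pullback recovers $\cM\otimes\cE^{x-\alpha}$ up to the localization already imposed. (2) On $X'$, decompose the transform of $\cE^{x-\alpha}\otimes\cM$ branch by branch. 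Along the strict transform of a curve $S_i$ with polar part $\alpha_i$, the function $x-\alpha=1/y-\alpha(t)$ restricted near $S_i$ behaves, after the parametrization $\mu_i(t)y=t^{q_i}$, like $t^{-q_i}/\mu_i(t) - \alpha(t) = \alpha_i(t)-\alpha(t)+(\text{holomorphic})$: so if $\alpha_i\neq\alpha$ the combined phase still has a genuine pole of the shape $\lambda/x_1^{n_1}$ or worse along a component, and Proposition \ref{prop:bibi}\eqref{prop:bibi1}--\eqref{prop:bibi2} makes the $\psi_{f'}$-contribution of that branch either vanish or be supported away from the relevant divisor component, so it does not survive after $\pi_{|D',+}$ down to the point. (3) Conversely, if $\alpha_i=\alpha$, the phase difference is holomorphic along $S_i$, the corresponding piece is $\cE^{(\text{regular})}\otimes\cR_i$ which is regular, and its moderate nearby cycles along $S_i$ give exactly $(\psi_tR_i,T_i)$ by the very definition of $R_i$ (the vanishing-cycle local system along $S_i$, transported through the normalization). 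The distinctness hypothesis on $\alpha_i(t)+\delta_i(0)$ guarantees that no two branches with $\alpha_i=\alpha$ get identified on $X'$ and that the ``resonant'' directions are isolated, so the contributions add up without overlap, yielding $\bigoplus_{i\mid\alpha_i=\alpha}(\psi_tR_i,T_i)$.

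The main obstacle is step (1)--(2): organizing the blow-up so that simultaneously $f'$ is a monomial and the relevant phase functions $x-\alpha$ (and all the $x-\alpha_i$ implicitly, since $\cM$'s regular singularities sit along all the $S_i$) become of the elementary pole type to which Proposition \ref{prop:bibi} applies, and then tracking precisely which divisor component of $D'$ carries each piece so that the support statements translate correctly under $\cH^0\pi_{|D',+}$. In particular one must check that when $\alpha_i\ne\alpha$ the resulting exponent along the strict transform of $S_i$ is really of type $\lambda/x_1^{n_1}$ with $n_1\geq1$ (case \eqref{prop:bibi2}, giving a module supported on a curve that then maps to a point only through $\cH^{\neq0}$, hence dies) or of type $\lambda/x^n$ with $n\in(\NN^*)^2$ (case \eqref{prop:bibi1}, outright vanishing); this is exactly where Assumption-type distinctness on the polar parts is used. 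The identification in step (3) of the surviving regular piece with $(\psi_tR_i,T_i)$ is then essentially a matter of unwinding definitions, once the geometry is set up, and follows Roucairol's argument, the only new input being that we keep the full nearby-cycle module — hence the monodromy Jordan structure — rather than just its characteristic polynomial.
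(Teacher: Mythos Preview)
Your overall strategy---blow up, apply Proposition~\ref{prop:bibi} to kill the non-resonant pieces, then read off the resonant contributions---matches the paper's, but step~(3) contains a genuine gap that is exactly where the work lies.

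When $\alpha_i=\alpha$, it is true that the \emph{restriction} of $1/y-\alpha(t)$ to the curve $S_i$ is holomorphic; but this does \emph{not} make the pulled-back module regular near the strict transform of $S_i$. In the paper's explicit two-stage blow-up $\pi_2\circ\pi_1$ (first $q$ times along the strict transform of $\mu(t)y=t^q$, then $q$ times along $u_1=0$), one lands in a chart $(w,v)$ on the last exceptional divisor $E_{\mathrm{last}}\simeq\PP^1$ where the exponent becomes exactly $w$, so that
\[
\pi^+(\cM\otimes\cE^{-\alpha+1/y})\simeq \pi^+\cM\otimes\cE^{w}.
\]
The factor $\cE^{w}$ is not regular (it is irregular at $w=\infty$), and taking $\psi_t$ gives a regular holonomic module $\cN$ on $E_{\mathrm{last}}$ tensored with $\cE^{w}$. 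The pushforward of this to the point $(0,\infty)$ is the cokernel of $\partial_w+1$ on $N=\Gamma(E_{\mathrm{last}},\cN)$---a genuine one-variable Fourier-type computation. The identification of this cokernel, \emph{together with its induced automorphism} $\wh T_1$, with $\bigoplus_c(\phi_{w-c}N,\phi_{w-c}T)$ is the content of Lemma~\ref{lem:malg}, which is the new ingredient over \cite{Roucairol07}: it is proved by reducing to $T$ unipotent and showing that the monodromy filtration of $\log T$ passes to the same filtration on both sides, via exactness of $\coker(\partial_w+1)$ and $\phi_{w-c}$. Only \emph{after} this does one invoke the normal-crossing commutation $\phi_{w-c}\psi_{t\circ\pi}=\psi_{t\circ\pi}\phi_{w-c}$ to identify $(\phi_{w-c}N,T)$ with $(\psi_tR_i,T_i)$ at $c=\delta_i(0)/\mu(0)$; the distinctness hypothesis on $\alpha_i+\delta_i(0)$ is what guarantees these intersection points are distinct and the crossing is normal.

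So your ``by the very definition of $R_i$'' in step~(3) hides two nontrivial facts: that the residual exponential $\cE^w$ must still be integrated out, and that doing so while keeping track of the full Jordan structure of $T$ requires the monodromy-filtration argument of Lemma~\ref{lem:malg}. Your step~(2) is essentially right in spirit (and the paper does use Proposition~\ref{prop:bibi} near the point $P$ at infinity on $E_{\mathrm{last}}$), but note that Proposition~\ref{prop:bibi} needs the exponent in the exact form $\lambda/x^n$ with constant $\lambda$, which a generic resolution will not produce; the paper's specific blow-up sequence is tailored to reduce the exponent to $w$ and $1/w''$.
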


\begin{proof}
Let $\pi_1$ denote the $q$-times composition of the blowing up of the intersection of the successive strict transforms of the curve $\mu(t)y=t^q$ with the exceptional divisor, starting with the point $(0,\infty)$. Computing as in \cite{Roucairol07}, we find that \hbox{$\psi_{t\circ\pi_1}\pi_1^+(\cM\otimes\cE^{-\alpha+1/y})$} is supported at the intersection $P_1$ of the strict transform of the curve $\mu(t)y=t^q$ with the exceptional divisor. There is a chart $(u,v)$ such that $t\circ\pi_1=v$ and $y\circ\pi_1=uv^q$. Let us set $u_1=1-u\mu(v)$, which is a coordinate centered at $P_1$. In this chart, the strict transform of $\mu_i(t)y=t^{q_i}$ has equation $\mu_i(v)u=v^{q_i-q}$ and goes through $P_1$ if and only if $q_i=q$ and $\mu_i(0)=\mu(0)$. In the coordinate system $(u_1,v)$, the equation of such a curve is $u_1=1-\mu(v)/\mu_i(v)$. On the other hand, $\pi_1^+(\cM\otimes\cE^{-\alpha+1/y})=\pi_1^+\cM\otimes\cE^{u_1/v^q}$.

Let $\pi_2$ denote the $q$-times composition of the blowing up of the intersection of the successive strict transforms of $u_1=0$ with the exceptional divisor. Let $E_{\text{last}}\simeq\PP^1$ be the last created component of the exceptional divisor $E$. It cuts $\ov{E\moins E_{\text{last}}}$ at a unique point $P$. There is a chart with coordinates $(w,v)$ such that $v=0$ is the equation of $E_{\text{last}}$, $w$ is an affine coordinate on $E_{\text{last}}$, and $P=\{w=\infty\}$. In this chart, we have $u_1\circ\pi_2=wv^q$ and $v\circ\pi_2=v$.

The strict transform of a curve $u_1=1-\mu(v)/\mu_i(v)$ has equation
\[
w=v^{-q}\,\frac{\mu_i(v)-\mu(v)}{\mu_i(v)}=\frac{(\alpha_i(v)+\delta_i(0)-\alpha(v)}{\mu_i(v)}+v\nu_i(v),
\]
with $\nu_i$ holomorphic.
It cuts $E_{\text{last}}$ away from $P$ if and only if $\alpha_i=\alpha$, and the intersection point is located at $w=\delta_i(0)/\mu_i(0)=\delta_i(0)/\mu(0)$. As a consequence, along $E_{\text{last}}\moins\{P\}$, the pull-back of $\bigcup_iS_i$ is a normally crossing divisor.

On the other hand, in this chart, we have $\pi_2^+\pi_1^+(\cM\otimes\cE^{-\alpha+1/y})=\pi_2^+\pi_1^+(\cM)\otimes\nobreak\cE^w$ and $\psi_t[\pi_2^+\pi_1^+(\cM)\otimes\cE^w]=\psi_t[\pi_2^+\pi_1^+\cM]\otimes\cE^w$.

Let us analyze what happens near $P$. In the neighbourhood of $P$, we have coordinates $(v',w')$, where $v'=0$ is the equation of $E_{\text{last}}$ and $w'=1/w$. It may happen that the strict transform of some $S_i$ goes through $P$. However, we can find a sequence of blowing-ups over $P$ such that, if we still denote by $P$ the point at infinity in the strict transform of $E_{\text{last}}$ by this sequence, then none of these strict transforms goes through $P$. Let $\pi$ denote the composition of this sequence with $\pi_1$ and $\pi_2$. In the chart centered at $P$, we have coordinates $(v'',w'')$ such that $v''=0$ is the equation of $E_{\text{last}}$ and, on $E_{\text{last}}$, $w''=1/w$. Near $P$, we have $\psi_{t\circ\pi}\pi^+(\cM\otimes\nobreak\cE^{-\alpha+1/y})=\psi_{v^{\prime\prime k}w^{\prime\prime\ell}}(\pi^+\cM\otimes\cE^{1/w''})$ with $k,\ell\in\NN$. Applying Proposition \ref{prop:bibi}\eqref{prop:bibi2}, we find that $\psi_{t\circ\pi}\pi^+(\cM\otimes\cE^{-\alpha+1/y})=\big(\psi_{t\circ\pi}\pi^+\cM\big)[w^{\prime\prime-1}]\otimes\nobreak\cE^{1/w''}$.

Let us denote by $\cN$ the regular holonomic module $\psi_{t\circ\pi}\pi^+\cM$ restricted to $E_{\text{last}}$ and localized at $P$. We identify $E_{\text{last}}$ with $\PP^1$ with its affine coordinate~$w$ and~$P$ with~\hbox{$w=\infty$}. Then $N=\Gamma(E_{\text{last}},\cN)$ is a regular holonomic $\CC[w]\langle\partial_w\rangle$-module equipped with an automorphism $T$ (coming with the functor $\psi_{t\circ\pi}$). Summing up, $(\psi_t(\cM\otimes\cE^{-\alpha+1/y}),T)$ is the direct image by the constant map $\pi:E_{\text{last}}\to(0,\infty)$ of $\cN\otimes\cE^w$, which can be computed as the cokernel $\wh N_1$ of the map $\partial_w+1:N\to N$, equipped with the induced automorphism~$\wh T_1$. Now, Proposition \ref{prop:rouc} is a consequence of the following lemma:

\begin{lemme}\label{lem:malg}
Let $N$ be a regular holonomic $\CC[w]\langle\partial_w\rangle$-module equipped with an automorphism $T$. Let $\wh N_1$ denote the cokernel of the (injective) map $\partial_w+\nobreak1:N\to N$, equipped with the induced automorphism $\wh T_1$. Then we have $(\wh N_1,\wh T_1)\simeq\oplus_{c\in\CC}(\phi_{w-c}N,\phi_{w-c}T)$.
\end{lemme}

Indeed, if the lemma is proved, we are reduced to computing $(\phi_{w-c}N,\phi_{w-c}T)$. Because the singular support of $\pi^+\cM$ has normal crossings along $E_{\text{last}}$, we have
\[
(\phi_{w-c}\psi_{t\circ\pi}\pi^+\cM,\phi_{w-c}T)=(\psi_{t\circ\pi}\phi_{w-c}\pi^+\cM,T_c)=(\psi_tR_i,T_i)
\]
if $c=\delta_i(0)$.
\end{proof}

\begin{proof}[Proof of Lemma \ref{lem:malg}]
The lemma is well-known if we forget $T$ (see \eg \cite[Prop\ptbl 1.5, p\ptbl 79]{Malgrange91}). Let us show how to take $T$ into account. We note that $T$ has a minimal polynomial, so we can decompose $N$ with respect to eigenvalues of $T$ and we are reduced to the case where $T$ is unipotent. We have then to prove that $\wh T_1$ and $\oplus_c\phi_{w-c}T$ have the same Jordan normal form. Let $\rM_\bbullet N$ denote the monodromy filtration of the nilpotent endomorphism $\log T$. It induces a filtration $\rM_\bbullet\wh N_1$ and $\oplus_c\rM_\bbullet\phi_{w-c}N$ and, as $\coker(\partial_w+1)$ and $\phi_{w-c}$ are exact functors on regular holonomic modules, we obtain, by applying the lemma forgetting automorphisms,
\[
\forall k\in\ZZ,\quad \gr_k^\rM\wh N_1=\wh{\gr_k^\rM N}_1\simeq\oplus_c\phi_{w-c}\gr_k^\rM N=\oplus_c\gr_k^\rM\phi_{w-c}N.
\]
If we prove that $\rM_\bbullet\wh N_1$ (\resp $\rM_\bbullet\phi_{w-c}N$) is the monodromy filtration of $\log\wh T_1$ (\resp $\log\phi_{w-c}T$), then $\log\wh T_1$ and $\oplus_c\log\phi_{w-c}T$ will have the same number of Jordan blocks of any given size, hence will have the same Jordan normal form.

Recall that the monodromy filtration $\rM_\bbullet(\ccN)$ of a nilpotent endomorphism $\ccN$ is characterized by two properties: (1) for any $\ell\in\ZZ$, $\ccN(\rM_\ell(\ccN))\subset\rM_{\ell-1}(\ccN)$, and (2)~for any $\ell\in\NN$, $\ccN^\ell$ induces an isomorphism $\gr_\ell^{\rM(\ccN)}\isom\gr_{-\ell}^{\rM(\ccN)}$. Using the exactness of the functors $\coker(\partial_w+1)$ and $\phi_{w-c}$, one easily checks that $\rM_\bbullet\wh N_1$ (\resp $\rM_\bbullet\phi_{w-c}N$) satisfies these characteristic properties for $\log\wh T_1$ (\resp for $\log\phi_{w-c}T$).
\end{proof}

\subsubsection*{End of the proof of Theorem~\ref{th:rouc}}
We assume that $S$ is non-empty, otherwise we know from \cite{Roucairol07} that $\cH^0t_+(\cE^x\otimes\cM)$ has a regular singularity at $t=0$. After Corollary~\ref{cor:Evphi}, it is enough to compute $\rho^+\cH^0t_+(\cE^x\otimes\cM)$ where $\rho$ is defined after Assumption~\ref{assumpt:irred}. Moreover, a standard argument enables us to apply $\rho$ before $\cH^0t_+$. The singular support of $\rho^+\cM$ near $(0,\infty)$ is $\rho^{-1}S=\bigcup_{\zeta^p=1}S_\zeta$. We will use Proposition~\ref{prop:rouc} with $\alpha=\alpha_1$. Assumption \ref{assumpt:irred2} implies that the assumption in Proposition \ref{prop:rouc} is fulfilled and moreover, $S_1$ is the only component which cuts $E_{\text{last}}$. As $\rho:S_1\to S$ is the normalization of $S$, $(\psi_tR_1,T_1)$ is identified with $(\psi_tR,T)$.
\end{proof}

\section{Local Laplace transform}\label{sec:lapalceloc}

\subsection{The local Laplace transform $\cF^{(0,\infty)}$}
In this section, we analyze the formal Laplace transform $\cF^{(0,\infty)}_\pm$ introduced in \cite{B-E04,Garcia04}. Let us recall that, given a finite dimensional $\Clt$-vector space $M$ with connection, its transform $\cF^{(0,\infty)}_\pm M$ is a finite dimensional $\Clth$-vector space with connection, where $\theta$ is a new variable. The functor $\cF^{(0,\infty)}_\pm$ morally corresponds to the integral transform $\int \cbbullet\, e^{\pm t/\theta}dt$. If $\iota$ denotes the germ of formal automorphism $t\mto-t$, we have $\cF^{(0,\infty)}_+M=\cF^{(0,\infty)}_-\iota^+M$. Let us note that, if $R$ is regular, then $\iota^+R\simeq R$.

\begin{theoreme}\label{th:main}
For any elementary $\Clt$-vector space $\El(\rho,\varphi,R)$ with irregular connection (\ie such that $\varphi\not\in\Cu$), the formal Laplace transform $\cF^{(0,\infty)}_\pm\El(\rho,\varphi,R)$ is the elementary finite dimensional $\Clth$-vector space with connection $\El(\wh\rho_\pm,\wh\varphi,\wh R)$ with (setting $L_q=(\Clu,d-\tfrac q2\,\tfrac{du}{u})$)
\[
\wh\rho_\pm(u)=\mp\frac{\rho'(u)}{\varphi'(u)},\quad
\wh\varphi(u)=\varphi(u)-\frac{\rho(u)}{\rho'(u)}\,\varphi'(u),\quad
\wh R\simeq R\otimes L_q.
\]
\end{theoreme}

\begin{remarque}[Regular connections]
If $R$ is a regular connection, then it is easy to check that $\cF^{(0,\infty)}_\pm R\simeq R$ (after replacing the name of the variable~$t$ with~$\theta$). This also explains why $\wh R$ does not depend on $\pm$ in the formula above.
\end{remarque}

We will prove the theorem for the transform $\cF^{(0,\infty)}_-$. The other case is obtained by setting $\cF^{(0,\infty)}_+=\cF^{(0,\infty)}_-\circ\iota^+$. In the following, when we forget the index $\pm$, we implicitly work in the $-$ case.

\begin{remarques}\label{rem:main}\mbox{}
\begin{enumerate}
\item\label{rem:main1}
Using notation of Remark \ref{rem:pq}, one has $\wh\rho=\wh{\rho_1}\circ\rho_2$, $\wh\varphi=\wh{\varphi_1}\circ\rho_2$ and $\wh{R_1}=R_1\otimes L_{q_1}=\rho_{2,+}R\otimes L_{q_1}=\rho_{2,+}(R\otimes L_q)=\rho_{2,+}\wh R$. Therefore, the formulas above do not depend on whether the presentation of $\El(\rho,\varphi,R)$ is minimal or not and we can also assume that $\rho(u)=u^p$.

\item\label{rem:main2}
Let us note that $\wh\rho$ is a ramification of order $\wh p=q+p$ and $\wh\varphi$ has a pole of order $\wh q=q$, so the slope of $\El(\wh\rho,\wh\varphi,\wh R)$ is $q/(q+p)$ (this is well known,\cf \cite{Malgrange91}). We thus have
\begin{itemize}
\item
$\text{slope}^{-1}\cF^{(0,\infty)}\El(\rho,\varphi,R)=1+\text{slope}^{-1}\El(\rho,\varphi,R)=1+p/q$,
\item
$\irr_0\cF^{(0,\infty)}\El(\rho,\varphi,R)=\irr_0\El(\rho,\varphi,R)=qr$,
\item
$\rg\cF^{(0,\infty)}\El(\rho,\varphi,R)=\rg\El(\rho,\varphi,R)+\irr_0\El(\rho,\varphi,R)$.
\end{itemize}
If we choose a $(q+p)$th root $u\mto\lambda(u)$ of $\wh\rho(u)$ and if we denote by $\lambda^{-1}(u)$ its reciprocal series, then, according to \eqref{eq:rholambda}, $\El(\wh\rho,\wh\varphi,\wh R)\simeq\El([u\mto u^{q+p}],\wh\varphi\circ\lambda^{-1},\wh R)$.

\item\label{rem:main3}
Let us also note that twisting $R$ by $L_q$ consists in multiplying its monodromy by $(-1)^q$ (this was denoted by $\otimes (t^{q/2})$ in Proposition~\ref{prop:detel}).

\item\label{rem:main4}
The inverse functor of $\cF^{(0,\infty)}_\pm$ is the local Laplace transform $\cF^{(\infty,0)}_\mp$ from $\Clth$-vector spaces with connection \emph{having slope $<1$} to $\Clt$-vector spaces with connection. If $\sigma$ has degree $p$ and $\psi$ has a pole of order $q<p$, we then have
\begin{equation}\label{eq:Finf0}
\cF^{(\infty,0)}_\pm\El(\sigma,\psi,S)=\El(\pm\sigma^2\psi'/\sigma',\psi+(\sigma/\sigma')\psi',S\otimes L_q).
\end{equation}

\item
The determinant (over $\Clth$) of $\El(\wh\rho,\wh\varphi,\wh R)$ is a rank-one regular connection which is equal to the regular factor of the determinant (over $\Clt$) of $\El(\rho,\varphi,R)$.

Indeed, since $\El(\wh\rho,\wh\varphi,\wh R)$ has slope $<1$, the first assertion is a consequence of Corollary \ref{cor:detreg}. Now, Proposition~\ref{prop:detel} gives $[\det\El(\rho,\varphi,R)]_{\reg}=\det R\otimes (t^{(p-1)r/2})$ and
\[
\det\El(\wh\rho,\wh\varphi,\wh R)=\det\wh R\otimes(t^{(p+q-1)\wh r/2})=\det R\otimes (t^{(p-1)r/2}),
\]
as $\wh r=r$ and $\det\wh R=\det R\otimes(t^{q\wh r/2})$.
\end{enumerate}
\end{remarques}

\begin{proof}[Proof of Theorem \ref{th:main}]
We first choose an algebraic model for $\El(\rho,\varphi,R)$, that is, we assume that $\rho(u)=u^p$, $\varphi(u)=u^{-q}a(u)$ where $a\in\CC[u]$ has degree $<q$ and $a(0)\neq0$; we moreover assume that $R$ is a free $\CC[u,u^{-1}]$-module with a connection having a regular singularity at $u=0$ and $u=\infty$ and no other pole. We now work algebraically and consider that $u$ is the coordinate of the torus $\GG_{m,u}$ and that $\theta$ is also a coordinate of $\GG_{m,\theta}$. We wish to compute the direct image of \hbox{$\big(\CC[\theta,\theta^{-1}]\otimes R\big)\otimes\cE^{\varphi(u)-u^p/\theta}$} by the projection to $\GG_{m,\theta}$, and then take its formal irregular part at $\theta=0$. Theorem~\ref{th:rouc} suggests us to consider the direct image $M$ of $R[\theta,\theta^{-1}]\defin\CC[\theta,\theta^{-1}]\otimes R$ by the morphism \hbox{$\pi:\GG_{m,\theta}\times\GG_{m,u}\to\GG_{m,\theta}\times\Afu_x$} given by
\[
\pi:(\theta,u)\mto(\theta,\varphi(u)-u^p/\theta).
\]
This morphism is finite, so $M$ consists of a single cohomology module, and it is regular holonomic, as $R$ is so. The critical locus of $\pi$ is defined by $\theta=pu^p/u\varphi'(u)=\wh\rho(u)$ and the singular support $S^*$ of $M$ is the curve parametrized by
\[
\GG_{m,u}\ni u\mto\Big(\wh\rho(u),\varphi(u)-\frac{\rho(u)}{\rho'(u)}\,\varphi'(u)\Big).
\]
The germ $\Afu_u$ at the origin is a parametrization of the germ at $(0,\infty)$ of the closure~$S$ of~$S^*$ through the natural extension of this map to $\Afu_u$.

A possible way of proving the theorem, by applying Theorem~\ref{th:rouc} to $M$, would be to compute the vanishing cycle module of $M$ along $S$. Nevertheless, one should first check that Assumption~\ref{assumpt:irred2} is fulfilled, which is not straightforward. So we will not apply this theorem, but we will use the same method without taking the direct image by $\pi$. The previous reasoning is now regarded as a heuristic justification of the formula given in the theorem (at least for $\wh\rho$ and $\wh\varphi$).

We first apply the ramification $\wh\rho:\eta\to\theta$ to $R[\theta,\theta^{-1}]\otimes\cE^{\varphi(u)-u^p/\theta}$ and then take the direct image by the projection to $\GG_{m,\eta}$. In order to compute the regular part with respect to the exponential term $\cE^{\wh\varphi(\eta)}$, we tensor this direct image by $\cE^{-\wh\varphi(\eta)}$ and then take the moderate nearby cycles $\psi_\eta$ of the resulting meromorphic connection. Arguing as in~\cite{Roucairol07}, it is equivalent to tensor with $\cE^{-\wh\varphi(\eta)}$ and to take $\psi_\eta$ first, and then to take the direct image by the projection to the point. We are therefore led to compute $\psi_\eta\big(\cE^{\varphi(u)-\wh\varphi(\eta)-u^p/\wh\rho(\eta)}\otimes R[\eta,\eta^{-1}]\big)$.

We will use the following notation: $\varphi(u)=u^{-q}a(u)$, $u\varphi'(u)=u^{-q}b(u)$ with $b(u)=-qa(u)+ua'(u)$. We also set $a(u)=\sum_{k=0}^{q-1}a_ku^k$, hence
\[
b(u)=\sum_{k=0}^{q-1}(k-q)a_ku^k,\quad a(u)-\frac1p b(u)=\sum_{k=0}^{q-1}\frac{p+q-k}{p}\,a_ku^k.
\]
By assumption, $a_0\neq0$ and if we set $(p,q-k)=d_k$, then $\gcd((d_k)_{k\mid a_k\neq0})=1$. We have
\[
\varphi(u)-\frac{u^p}{\wh\rho(\eta)}-\wh\varphi(\eta)=\frac{1}{u^q\eta^{p+q}}\,\Big(a(u)\eta^{p+q}-u^{p+q}\frac{b(\eta)}{p}-u^q\eta^p(a(\eta)-\tfrac1pb(\eta))\Big).
\]
In order to simplify this expression, we blow up the ideal $(u,\eta)$. Let us consider the chart with coordinates $(v,\eta)$ with $e:(v,\eta)\mto(v\eta,\eta)$. The previous expression becomes
\[
\frac{1}{v^q\eta^q}\,\Big(a(v\eta)-v^{p+q}\frac{b(\eta)}{p}-v^q(a(\eta)-\tfrac1pb(\eta))\Big)=:\frac{f(v,\eta)}{v^q\eta^q}.
\]
We have to compute $\psi_\eta(\cE^{f(v,\eta)/v^q\eta^q}\otimes e^+(R[\eta,\eta^{-1}]))$. This module is supported on $\eta=0$ by definition. Moreover, according to Proposition \ref{prop:bibi}, it is supported at most on the set defined by $f(v,0)=0$. One can also check that it has no component supported at $v=\infty$, by using the same argument in the other chart of the blowing-up space. The function $f$ can be written as
\[
f(v,\eta)=\sum_{k=0}^{q-1}a_k\eta^k\Big(v^k+\frac{q-k}{p}\,v^{p+q}-\frac{p+q-k}{p}\,v^q\Big).
\]
It is easy to compute that the polynomial $f(v,0)=a_0(1+\frac qpv^{p+q}-(1+\frac qp)v^q)$ has exactly $d_0=(p,q)$ double roots, which are the $d_0$th roots of unity, and the other roots $v_i$ are simple. The branches of $f(v,\eta)$ at the points $(v_i,0)$ are thus smooth and transversal to $\eta=0$. Moreover, one easily checks that $(v-1)^2$ divides $f(v,\eta)$, so that in some neighbourhood of $(1,0)$, $f(v,\eta)=(v-1)^2\cdot\text{unit}$. Lastly, for any~$\zeta$ with $\zeta^{d_0}=1$ and $\zeta\neq1$, if we set $v'=v-\zeta$, the germ of $f$ at $(\zeta,0)$ can be written as $v^{\prime2}\lambda(v',\eta)+\eta^\ell\mu(v',\eta)$, where $\lambda,\mu$ are units and $\ell\in[1,q-1]$: indeed, the assumption implies that, for any such $\zeta$, there exists a smallest $\ell\in[1,q-1]$ such that $a_\ell\neq0$ and $\zeta^\ell\neq1$; we have $f(\zeta,\eta)=\sum_ka_k(\zeta^k-1)\eta^k$, so the first nonzero term in this series is for $k=\ell$; similarly, the coefficient of $v'$ in the expansion of~$f$ is $\sum_kka_k(\zeta^k-1)\eta^k$, which is a multiple of $\eta^\ell$. The situation is illustrated in Figure~\ref{fig:2}.
\begin{figure}[htb]
\begin{center}
\begin{picture}(90,30)(0,0)
\put(0,10){\line(1,0){90}}
\put(70,0){\line(0,1){20}}
\put(70,10){\circle*{1}}
\put(71,6.5){\footnotesize $v_i$}
\qbezier(20,20)(30,0)(40,20)
\put(21,20){\footnotesize $k_\zeta=1$}
\put(30,10){\circle*{1}}
\put(40,2.5){\footnotesize $\begin{array}{l}\zeta\\\zeta^{d_0}=1\\ \zeta\neq1\end{array}$}
\put(45,7){\vector(4,1){9}}
\put(40,7){\vector(-4,1){9}}
\qbezier(55,10)(54,15)(45,20)
\qbezier(55,10)(56,15)(65,20)
\put(47,20){\footnotesize $k_\zeta\geq2$}
\put(55,10){\circle*{1}}
\put(1,5){\vector(0,1){25}}
\put(87,5){\vector(0,1){25}}
\put(2.5,28){{\footnotesize strict transform}}
\put(2.5,25){{\footnotesize of $u=0$}}
\put(65,28){{\footnotesize strict transform}}
\put(75,25){{\footnotesize of $\eta=0$}}
\put(3.5,2.5){\footnotesize $\eta=0$}
\put(7,4.5){\vector(0,1){5}}

\linethickness{1.3pt}
\put(15,0){\line(0,1){20}}
\put(15,10){\circle*{1}}
\put(16,6.5){\footnotesize $1$}
\end{picture}
\end{center}
\caption{The branches of curve $f(v,\eta)=0$ near $\eta=0$}\label{fig:2}
\end{figure}

In suitable local coordinates $(v',\eta)$ centered at the points $(v_i,0)$ or the points $(\zeta,0)$ with $\zeta^{d_0}=1$, the exponent $f(v,\eta)/v^q\eta^q$ can thus be written as $g(v',\eta)/\eta^q$, where
\[
g(v',\eta)=
\begin{cases}
v'&\text{at }(v_i,0),\\
v^{\prime2}&\text{at }(1,0),\\
v^{\prime2}\lambda(v',\eta)+\eta^\ell\mu(v',\eta) \text{ (with $\ell\in[1,q-1]$)} &\text{at }(\zeta,0).
\end{cases}
\]

Theorem \ref{th:main} is now a consequence of Lemma \ref{lem:psi} below:
we apply the lemma to $S=e^+(R[\eta,\eta^{-1}])$, \ref{lem:psi}\eqref{lem:psi1} to the germ of~$f$ at each $(v_i,0)$, \ref{lem:psi}\eqref{lem:psi2} to the germ of~$f$ at $(1,0)$ and \ref{lem:psi}\eqref{lem:psi3} to the germ of~$f$ at at each $(\zeta,0)$ with $\zeta^{d_0}=1$ and $\zeta\neq1$.
\end{proof}

\begin{lemme}\label{lem:psi}
Let $S$ be a germ of regular meromorphic connection in coordinates $(v',\eta)$ with pole on $\eta=0$ at most and let $q\geq1$. Then
\begin{enumerate}
\item\label{lem:psi1}
$\psi_\eta(\cE^{v'/\eta^q}\otimes S)=0$.
\item\label{lem:psi2}
$\psi_\eta(\cE^{v^{\prime2}/\eta^q}\otimes S)$ is supported at $v'=0$ and its germ is isomorphic to the germ at $v'=0$ of $\psi_\eta(S\otimes L_{\eta,q})$ with $L_{\eta,q}\simeq(\CC[\eta,\eta^{-1}],d-\tfrac q2\,\tfrac{d\eta}{\eta})$.
\item\label{lem:psi3}
Let $h(v',\eta)=v^{\prime2}\lambda(v',\eta)+\eta^\ell\mu(v',\eta)$, where $\lambda,\mu$ are local units and $\ell\in\NN^*$. Let us assume that $q\geq\ell+1$. Then $\psi_\eta(\cE^{h(v',\eta)/\eta^q}\otimes S)=0$.
\end{enumerate}
\end{lemme}

\begin{proof}\mbox{}\par
\eqref{lem:psi1}
As $S$ is an iterated extension of rank-one meromorphic connections, we can assume that it is isomorphic to $\CC\{\eta\}[\eta^{-1}]$ with connection $d-\alpha d\eta/\eta$. Then $\cE^{v'/\eta^q}\otimes S\simeq \CC\{\eta\}[\eta^{-1}]$ with connection $d+dv'/\eta-(\alpha+v'/\eta) d\eta/\eta$. The generator $\epsilon=1$ satisfies $\epsilon=\eta\partial_{v'}\epsilon$, showing that $\psi_\eta(\cE^{v'/\eta^q}\otimes S)=0$.

\eqref{lem:psi2}
The first part of the assertion is clear. For the second part, we argue by induction on $q$. Let us first assume that $q\geq3$. We blow up the ideal $(v',\eta)$. We get two charts: $(v'_1,\eta_1)$ with $v'=v'_1\eta_1$ and $\eta=\eta_1$ and $(v'_2,\eta_2)$ with $v'=v'_2$ and $\eta=v'_2\eta_2$.

In the chart $(v'_2,\eta_2)$, we find $\psi_{v'_2\eta'_2}(\cE^{1/v_2^{\prime q-2}\eta_2^q}\otimes e^+S)=0$. In the chart $(v'_1,\eta_1)$, $\psi_{\eta_1}(\cE^{v_1^{\prime2}/\eta_1^{q-2}}\otimes e^+S)$ is supported at the origin of the chart, and we conclude by induction.

If $q=2$, we identify (in the same way as we did in the proof of Proposition \ref{prop:rouc}) $\psi_{\eta\circ e}e^+(\cE^{v^{\prime2}/\eta^2}\otimes S)$ (with its automorphism $T$ coming with the functor $\psi_{\eta\circ e}$) with the $\CC[v'_1]$-free module with connection $(\psi_\eta S,T)\otimes_\CC\cE^{v_1^{\prime2}}$, where $\cE^{v_1^{\prime2}}=(\CC[v'_1],d+\nobreak2v'_1)$. It is then enough to show that the direct image by $e$ of $(\CC[v'_1],d+2v'_1)$ is equal to $\CC$, that is, that the morphism $\partial_{v'_1}+2v'_1:\CC[v'_1]\to\CC[v'_1]$ is injective and has a cokernel equal to $\CC$. This is easily checked.

Let us end with the case $q=1$. We first show that the dimension of the space $\psi_\eta(\cE^{v^{\prime2}/\eta}\otimes\nobreak S)_0$ is equal to that of $(\psi_\eta S)_0$ by a computation of Euler characteristics, as in \cite{Roucairol07}. In order to do this computation, we first blow up the ideal $(v',\eta)$ and then, in the chart $(v'_2,\eta_2)$, we blow up the ideal $(v'_2,\eta_2)$. We denote by $E_2\simeq\PP^1$ the exceptional divisor over $v'_2=0,\eta_2=0$ and by $E_1\simeq\PP^1$ the strict transform of the exceptional divisor obtained after the first blowing-up. After the total blow up $\wt e$, the module $\psi_{\eta\circ\wt e}\wt e{}^+(\cE^{v^{\prime2}/\eta}\otimes S)$ is supported on $E_1\cup E_2$ and $e{}^+(\cE^{v^{\prime2}/\eta}\otimes S)$ has regular singularity along $E_1\cup E_2$ except at $P_2$. Figure \ref{fig:3} gives the values of the function $x\mto\chi\DR\psi_{\eta\circ\wt e}\wt e{}^+(\cE^{v^{\prime2}/\eta}\otimes S)_x$ on each stratum of the natural stratification of $\wt e{}^{-1}(\{\eta=0\})$, when $\rg S=1$ (which is clearly enough), from which one deduces the desired assertion, as
\[
-1\cdot\chi_\top(E_1\moins\{P_1\})+0\cdot\chi_\top(P_1)-2\cdot\chi_\top(E_2\moins\{P_1,P_2\})+2\cdot\chi_\top(P_2)=1.
\]

\begin{figure}[htb]
\begin{center}
\begin{picture}(60,25)(0,0)
\put(0,10){\line(1,0){60}}
\put(45,0){\vector(0,1){20}}
\put(45,10){\circle*{1}}
\put(47,18){{\footnotesize strict transform}}
\put(47,15){{\footnotesize of $\eta=0$}}

\put(15,0){\line(0,1){25}}
\put(15,10){\circle*{1}}
\put(10.5,23){{\footnotesize $E_1$}}
\put(1,11.5){{\footnotesize $E_2$}}
\put(16,11.5){{\footnotesize $P_1$}}
\put(40.5,11.5){{\footnotesize $P_2$}}

\put(18.5,24){\footnotesize $\chi=-1$}
\put(22,23){\vector(-1,-1){5.5}}

\put(5,.5){\footnotesize $\chi=0$}
\put(8.5,3.3){\vector(1,1){5.5}}

\put(26,.5){\footnotesize $\chi=-2$}
\put(30,3.3){\vector(0,1){6}}

\put(50,.5){\footnotesize $\chi=2$}
\put(52,3.3){\vector(-1,1){5.5}}

\end{picture}
\end{center}
\caption{\hbox{The Euler characteristic function of $\DR\psi_{\eta\circ\wt e}\wt e{}^+(\cE^{v^{\prime2}/\eta}\otimes S)$}}\label{fig:3}
\end{figure}

Once this computation is done, we prove the result for $S=\CC\{\eta\}[\eta^{-1}]^{\rg S}$ with connection $d+(\alpha\Id+N)d\eta/\eta$, where $N$ is a constant nilpotent matrix. Then \hbox{$\cE^{v^{\prime2}/\eta}\otimes S$} has a basis $\epsilon$ which satisfies
\[
\eta\partial_{v'}\epsilon=2v'\epsilon\quad\text{and}\quad \eta\partial_\eta\epsilon=\epsilon\cdot\big((\alpha-v^{\prime2}/\eta)\Id+N\big),
\]
from which we deduce, as $\eta\partial_{v'}^2\epsilon=(2+4v^{\prime2}/\eta)\epsilon$,
\begin{equation}\label{eq:N}
\eta\partial_\eta\epsilon=\epsilon\cdot\big((\alpha+1/2)\Id+N\big)-\frac14\eta\partial_{v'}^2\epsilon.
\end{equation}
A standard computation of $V$-filtration now shows that each $\epsilon_i\in\epsilon$ has order $\alpha+1/2$ with respect to the $V$-filtration and that the classes of the $\epsilon_i$ in $\psi_\eta(\cE^{v^{\prime2}/\eta}\otimes S)_0$ generate this $\CC$-vector space. By the dimension count above, they form a basis of this space. From \eqref{eq:N} one then gets that
\[
\psi_\eta(\cE^{v^{\prime2}/\eta}\otimes S)_0\simeq\big(\CC^{\rg S},\exp2i\pi((\alpha+1/2)\Id+N)\big),
\]
as was to be proved.

\eqref{lem:psi3}
We argue by induction on $\ell\geq1$. Let us first assume that $\ell\geq3$ (hence $q\geq3$) and let us blow up the ideal $(v',\eta)$. In the chart $(v'_1,\eta_1)$, $\psi_{\eta_1}e^+(\cE^{h(v',\eta)/\eta^q}\otimes S)=\psi_{\eta_1}(\cE^{h_1(v'_1,\eta_1)/\eta_1^{q-2}}\otimes e^+S)$ with $h_1$ of the same form as $h$, but with $\ell$ replaced by $\ell-2$, hence the result by induction.

If $\ell=2$, the strict transform of $h=0$ cuts the exceptional divisor $\eta_1=0$ transversally at two distinct points. Applying \eqref{lem:psi1} at these points and an easy argument at the other points $(v'_1,0)$, we obtain $\psi_{\eta_1}e^+(\cE^{h(v',\eta)/\eta^q}\otimes S)=0$. In the chart $(v'_2,\eta_2)$, we are led to compute $\psi_{v'_2\eta_2}(\cE^{\text{unit}/v_2^{\prime q-2}\eta^q}\otimes e^+S)$ and, since $q\geq3$, we can apply Proposition \ref{prop:bibi} to obtain that the result is $0$.

We now assume that $\ell=1$, so $q\geq2$, and we blow up as above. We find that $\psi_{\eta\circ e}e^+(\cE^{h/\eta^q}\otimes S)$ is supported at the origin of the chart $(v'_2,\eta'_2)$, and is equal to $\psi_{v'_2\eta_2}(\cE^{h_2/v_2^{\prime q-1}\eta_2^q}\otimes e^+S)$, with $h_2(v'_2,\eta_2)=v'_2\lambda(v'_2,v'_2\eta_2)+\eta_2\mu(v'_2,v'_2\eta_2)$. If we blow up the center of this chart, we find that the strict transform of $h_2=0$ is smooth and cuts transversally the exceptional divisor. Applying \eqref{lem:psi1} at this point and Proposition \ref{prop:bibi}\eqref{prop:bibi2} at the crossing points of the pull back of the divisor $v'_2\eta_2=0$ gives the desired vanishing.
\end{proof}

\begin{remarque}[Extension to $\wh\cD$-modules, \cf \cite{Malgrange91}]\label{rem:F0infDhat}
Let $M$ be a $\wh\cD$-module (\cf Remark \ref{rem:reg}). The local Laplace transform $\cF^{(0,\infty)}_\pm$ is naturally extended to such objects, in such a way that, if we choose a module $N$ on the Weyl algebra with regular singularities at $0$ and $\infty$ and no other singularities, and such that $\Ct\otimes_{\Cht}N=M_\reg$, then $\cF^{(0,\infty)}_\pm M_\reg$ is the germ at infinity of the Laplace transform of $N$.

It is known that giving $M_\reg$ is equivalent to giving a diagram of vector spaces with automorphisms $T$
\[
\xymatrix@C=1cm{
\psi_tM_\reg\ar@/^1pc/[r]^{c}&\phi_tM_\reg\ar@/^1pc/[l]^{v}
}
\]
where $(\psi_tM_\reg,T)=(\psi_tM_\reg,\Id+vc)$ and $(\phi_tM_\reg,T)=(\phi_tM_\reg,\Id+cv)$. Let us remark that the effect of the involution $\iota^+$ is to replace $c$ with $-c$ and $v$ with $-v$, so that $\iota^+M_\reg\simeq M_\reg$.

Giving the vector space with automorphism $(\psi_tM_\reg,T)$ is equivalent to giving a regular $\Clt$-module with connection: this is $\Clt\otimes_{\Ct}M_\reg$.

On the other hand, the $\Clth$-vector space with connection $\cF^{(0,\infty)}_\pm M_\reg$ corresponds to the vector space with automorphism $(\phi_tM_\reg,T)$.
\end{remarque}

\subsection{Example of a direct computation}\label{sec:example}
Let us compute directly the local Laplace transform of $\cE^{a/t^q}$ for $a\in\CC^*$ and $q\in\NN^*$. We can define the corresponding $\CC[t]\langle\partial_t\rangle$-module by the differential equation $t^qt\partial_t+qa$. The Laplace transform in the $\theta$-variable (\ie corresponding to the kernel $e^{-t/\theta}$) is obtained by setting $t=\theta^2\partial_\theta$ and $\partial_t=\theta^{-1}$. Using that $\theta^2\partial_\theta\theta^{-1}=\theta\partial_\theta-1$, it has the equation $(\theta^2\partial_\theta)^q(\theta\partial_\theta-1)+qa$. The Newton polygon of this operator at $\theta=0$ has only one slope, which is $q/(q+1)$, so a ramification of order $q+1$ is needed to get an integral slope. Let us set $\theta=-\eta^{q+1}/qa$ (hence $\theta\partial_\theta=(q+1)^{-1}\eta\partial_\eta$). The pull-back by the ramification $\eta\mto-\eta^{q+1}/qa$ of the previous operator is, up to a constant,
\enlargethispage{\baselineskip}%
\begin{multline*}
(\eta^{q+1}\eta\partial_\eta)^q(\eta\partial_\eta-(q+1))+(-1)^q[q(q+1)a]^{q+1}\\[-5pt]
=\prod_{k=1}^{q+1}(\eta^{q+1}\partial_\eta-k\eta^q)+(-1)^q[q(q+1)a]^{q+1}.
\end{multline*}
We twist the corresponding meromorphic connection with $\cE^{-\lambda/\eta^q}$ in order to create a regular part. After twisting, the new operator is obtained from the old one by replacing $\eta^{q+1}\partial_\eta$ with $\eta^{q+1}\partial_\eta-q\lambda$, that is,
\[
\prod_{k=1}^{q+1}(\eta^{q+1}\partial_\eta-q\lambda-k\eta^q)+(-1)^q[q(q+1)a]^{q+1}.
\]
We choose $\lambda$ such that the constant term vanishes, that is, $\lambda=\zeta(q+1)a$ with $\zeta^{q+1}=1$. Let us fix $\zeta=1$ for instance. The resulting operator can now be divided by $\eta^q$ and gets as regular part the following operator:
\[
[-q(q+1)a]^q(q+1)\Big[\eta\partial_\eta-\frac{q+2}{2}\Big].
\]
In other words, if we denote by $L_q$ the rank-one local system with monodromy $(-1)^q$, we find that the localized Laplace transform of $\cE^{a/t^q}$ is $\El(\wh\rho,\wh\varphi,L_q)$, where $\wh\rho,\wh\varphi$ are given by the theorem.

\subsection{The local Laplace transforms $\cF^{(s,\infty)}$ and $\cF^{(\infty,\infty)}$}\label{subsec:cinf-infinf}

\subsubsection*{$\cF^{(s,\infty)}$, $s\in\CC$}
The local Laplace transform $\cF^{(s,\infty)}_\pm$ is defined as $\cE^{\pm s/\theta}\otimes\cF^{(0,\infty)}_\pm$. Therefore, the formula for computing $\cF^{(s,\infty)}_\pm\El(\rho,\varphi,R)$ is straightforwardly obtained from that giving $\cF^{(0,\infty)}_\pm\El(\rho,\varphi,R)$:\begin{align}
&\cF^{(s,\infty)}_\pm\El(\rho,\varphi,R)\simeq\El(\wh\rho_\pm,\wh\varphi\pm c/(\theta\circ\wh\rho),\wh R)\quad \text{if $\varphi\not\in\Cu$},\\
&\cF^{(s,\infty)}_\pm M_\reg\simeq \El(\Id,\pm c/\theta,\cF^{(0,\infty)}_\pm M_\reg)\quad\text{($M_\reg$ a regular $\wh\cD$-module).}
\end{align}
When $s\neq0$, $\cF^{(s,\infty)}_\pm M$ has slope one for any holonomic $\wh\cD$-module.

\subsubsection*{$\cF^{(\infty,\infty)}$}
The transform $\cF^{(\infty,\infty)}_\pm$ corresponds to the integral transform $\int\cbbullet e^{\pm1/t\theta}(-dt/t^2)$. It applies only to $\Clt$-vector spaces with connection having slope $>1$ and produces a $\Clth$-vector spaces with connection having slope $>1$.

If we consider an elementary formal connection $\El(\rho,\varphi,R)$ with $q>p$ then, setting now
\begin{equation}\label{eq:infinf}
\wh\rho_\pm(u)=\pm\frac{\rho'}{\varphi'\rho^2},\quad\wh\varphi=\varphi+\frac{\rho(u)}{\rho'(u)}\,\varphi'(u),\quad\wh R=R\otimes L_q,
\end{equation}
we have $\cF^{(\infty,\infty)}_\pm\El(\rho,\varphi,R)\simeq\El(\wh\rho_\pm,\wh\varphi,\wh R)$. The proof is similar to that of Theorem~\ref{th:main} and we will not repeat it.

\begin{remarques}\mbox{}
\begin{enumerate}
\item
If $q>p$, we have $\wh p=q-p$ and $\wh q=q$. Moreover,
\begin{itemize}
\item
$\text{slope}^{-1}\cF^{(\infty,\infty)}\El(\rho,\varphi,R)=1-\text{slope}^{-1}\El(\rho,\varphi,R)=1-p/q$,
\item
$\irr_0\cF^{(\infty,\infty)}\El(\rho,\varphi,R)=\irr_0\El(\rho,\varphi,R)=qr$,
\item
$\rg\cF^{(\infty,\infty)}\El(\rho,\varphi,R)=\irr_0\El(\rho,\varphi,R)-\rg\El(\rho,\varphi,R)=(q-p)r$.
\end{itemize}
\item
The inverse transform of $\cF^{(\infty,\infty)}_\pm$ is $\cF^{(\infty,\infty)}_\mp$.
\end{enumerate}
\end{remarques}

\subsection{The use of the index of rigidity}\label{subsec:rigidity}
Let $X$ be a smooth projective curve, let $S$ be a finite set of point and let $\cM(*S)$ be a locally free $\cO_X(*S)$-module of rank $r$ with connection $\nabla$. Let $\cM(*S)_{\min}$ be the minimal extension of $\cM(*S)$ (also called middle extension, or intermediate extension) in the sense of holonomic $\cD_X$-modules: $\cM(*S)_{\min}$ is the smallest $\cD_X$-submodule $\cN$ of $\cM(*S)$ such that $\cM(*S)/\cN$ is supported in a finite set. Let us recall the global index formula for $\cM(*S)_{\min}$ (the global index formula for $\cM(*S)$ is obtained by forgetting the last sum in the formula below, and one can refer to \cite[Th\ptbl 4.9(ii), p\ptbl 70]{Malgrange91} for it; the formula for $\cM(*S)_{\min}$ is an easy consequence of it):
\begin{multline}\label{eq:index}
\chi(X,\DR\cM(*S)_{\min})\\
= r\chi_{\top}(X\moins S)+\sum_{s\in S}\irr_s\cM(*S)+\sum_{s\in S}\dim\ker(T_{\reg,s}-\Id),
\end{multline}
where $T_{\reg,s}$ denotes the formal regular monodromy at $s$, that is, the monodromy of $[\wh\cO_{X,s}\otimes\cM(*S)]_\reg$.

Applying this formula to $\End(\cM(*S))$ instead of $\cM(*S)$ gives, according to \eqref{eq:endreg} and to the formula in Remark~\ref{rem:Zrho}:
\begin{multline}\label{eq:indexend}
\chi\big(X,\DR\End(\cM(*S))_{\min}\big)\\= r^2\chi_{\top}(X\moins S)+\sum_{s\in S}\irr_s\End(\cM(*S))+\sum_{s\in S}\sum_{i\in I'_s} p_{s,i}\dim\rZ(T_{s,i}),
\end{multline}
where $\bigoplus_{i\in I'_s}\El(\rho_{s,i},\varphi_{s,i},R_{s,i})$ is the refined Turrittin-Levelt decomposition of $\cM(*S)$ at~$s$, $p_{s,i}$ is the degree of $\rho_{s,i}$, $T_{s,i}$ is the monodromy of $R_{s,i}$ and $\rZ(\cbbullet)$ denotes the centralizer of $\cbbullet$. We will set $I'_s=I_s\cup\{\reg\}$ with an obvious meaning, and $\rho_{s,\reg}(t)=t$, $\varphi_{s,\reg}=0$. Let us note that \eqref{eq:indexend} was yet obtained (in a simpler case) by A.~Paiva \cite{Paiva06}. The left-hand term in \eqref{eq:indexend} is by definition the index of rigidity of $\cM(*S)$ and is denoted by $\rig\cM(*S)$ (\cf \cite{Katz96,B-E04}).

\begin{exemples}\mbox{}
\begin{itemize}
\item
Assume $S=\{0,\infty\}$. Then $\rig\cM(*S)=2$ if and only if, for $s=0,\infty$, $\irr_s=0$, $\# I'_s=1$ and $\dim\rZ(\rho_{s,+}T_{s})=1$. This is equivalent to ask that $\cM(*S)$ has rank one.
\item
(Compare with \cite[Cor\ptbl4.9]{B-E04}.) Assume $\cM(*S)$ has rank one. Then, for any $s\in S$, $\irr_s=0$, $\# I'_s=1$ and $\dim\rZ(\rho_{s,+}T_{s})=1$. Therefore, $\rig\cM(*S)=\chi_{\top}(\PP^1\moins S)+\#S=\chi_{\top}(\PP^1)=2$.
\end{itemize}
\end{exemples}

Let us now assume that $X=\PP^1$ and that $S$ contains $\infty$. We then set $M_{\min}=\Gamma\big(\PP^1,[\cM(*S)_{\min}](*\infty)\big)$, which is a holonomic module over the Weyl algebra $\CC[t]\langle\partial_t\rangle$. Let $F_\pm(M_{\min})$ be its Laplace transform with respect to the kernel $e^{\pm t\tau}$. This is a holonomic module over the Weyl algebra $\CC[\tau]\langle\partial_\tau\rangle$ (\cf the introduction). We denote by $\wh S\subset\wh\PP^1$ the set of its singularities (including $\wh\infty$) on the $\tau$-line. There exists a unique $\cD_{\wh\PP^1}$-module $\wh\cM_\pm$, equal to $\wh\cM_\pm(*\wh\infty)$, such that $F_\pm(M_{\min})=\Gamma(\wh\PP^1,\wh\cM_\pm)$. We then consider the associated bundle with connection $\wh\cM_\pm(*\wh S)$. If $M$ is irreducible (or semi-simple), then $F_\pm M$ is so, and both are equal to their minimal extensions. Let us recall:

\begin{theoreme}[\cite{Katz96,B-E04}]\label{th:rig}
If $M$ is irreducible, then $\rig\wh\cM_\pm(*\wh S)=\rig\cM(*S)$.\qed
\end{theoreme}

We will show that this theorem implies, when $M$ is irreducible, a relation between the dimensions of the centralizers of the formal monodromies corresponding to the purely irregular parts at the singularities of $M$.

Let $\psi$ be a finite dimensional vector space equipped with an automorphism ${}^\psi\!T$ and let us set $\phi=\im({}^\psi\!T-\Id)$, equipped with the induced automorphism ${}^\phi\!T$. Then (\cf \eg \cite[Prop\ptbl2.4.10]{Paiva06})
\begin{equation}\label{eq:Zmin}
\dim \rZ({}^\psi\!T)-\dim \rZ({}^\phi\!T)=\big(\dim\ker({}^\psi\!T-\Id)\big)^2=:\kappa^2.
\end{equation}

Let us take the notation given after \eqref{eq:indexend} and let us assume $M=M_{\min}$. The formal stationary phase formula of \cite{B-E04,Garcia04} implies, for the part with slope $\leq1$ at $\wh\infty$, and setting $F=F_-$ for instance:
\begin{equation}
[(FM)(*\wh S)]^{\leq1}_{\wh\infty}\simeq\tbigoplus_{s\in S\moins\{\infty\}}\Big[(\cE^{-s/\theta}\otimes M_{s,\reg}^\phi)\oplus\tbigoplus_{i\in I_s}\El(\wh{\rho_{s,i}},\wh{\varphi_{s,i}},\wh{R_{s,i}})\Big],
\end{equation}
where $M_{s,\reg}^\phi$ corresponds to the monodromy ${}^\phi\!T_{s,\reg}$, if ${}^\psi\!T_{s,\reg}=T_{s,\reg}$ is defined as in \eqref{eq:index} and $\wh{\rho_{s,i}}$ is obtained from the formula in Theorem \ref{th:main} of \S\ref{subsec:cinf-infinf} for $\cF^{(s,\infty)}$. Indeed, as explained in the introduction, and according to Theorem \ref{th:main}, it remains to justify the terms $\cE^{-s/\theta}\otimes M_{s,\reg}^\phi$ and, after a translation to the origin, it is enough to consider the case where $s=0$; in other words, one is lead to compute the formal Fourier transform at $\wh\infty$ of a regular minimal extension formal $\cD$-module; an easy computation gives the desired formula.

Similarly, with obvious notation,
\begin{equation}
[(FM)(*\wh S)]^{>1}_{\wh\infty}\simeq\tbigoplus_{i\in I_\infty^{>1}}\El(\wh{\rho_{\infty,i}},\wh{\varphi_{\infty,i}},\wh{R_{\infty,i}}).
\end{equation}
If we denote by $Z$ the last sum in \eqref{eq:indexend} and by $\wh Z$ the last sum in the formula $\wh{\eqref{eq:indexend}}$ obtained by applying \eqref{eq:indexend} to $FM$, we find, assuming $M=M_{\min}$ and $FM=(FM)_{\min}$, and using \eqref{eq:Zmin}:
\begin{align*}
Z&=\sum_{s\in S\moins\{\infty\}}\Big[\dim\rZ(T_{s,\reg})+\sum_{i\in I_s}p_{s,i}\dim \rZ(T_{s,i})\Big]\\[-5pt]
&\hspace*{1cm}+\sum_{\wh s\in \wh S\moins\{\wh\infty\}}\Big[\dim\rZ({}^\phi\!T_{\wh s,\reg})+\sum_{i\in I_{\wh s}}\wh{p_{\wh s,i}}\dim \rZ(T_{\wh s,i})\Big]+\sum_{i\in I_\infty^{>1}}p_{\infty,i}\dim \rZ(T_{\infty,i})\\
\wh Z&=\sum_{s\in S\moins\{\infty\}}\Big[\dim\rZ({}^\phi\!T_{s,\reg})+\sum_{i\in I_s}\wh{p_{s,i}}\dim \rZ(T_{s,i})\Big]\\[-5pt]
&\hspace*{1cm}+\sum_{\wh s\in \wh S\moins\{\wh\infty\}}\Big[\dim\rZ(T_{\wh s,\reg})+\sum_{i\in I_{\wh s}}p_{\wh s,i}\dim \rZ(T_{\wh s,i})\Big]+\sum_{i\in I_\infty^{>1}}\wh{p_{\infty,i}}\dim \rZ(T_{\infty,i}),
\end{align*}
and, setting $\kappa_{s,\reg}=\dim\ker(T_{s,\reg}-\Id)$, using that $p_{s,i}-\wh{p_{s,i}}=q_{s,i}=\wh{q_{s,i}}$ if $s\in S\moins\{\infty\}$, a corresponding equality at $\wh s$, and $p_{\infty,i}+\wh{p_{\infty,i}}=q_{\infty,i}=\wh{q_{\infty,i}}$,
\begin{multline}\label{eq:ZZhat}
Z-\wh Z=\sum_{s\in S\moins\{\infty\}}\Big[\kappa_{s,\reg}^2+\sum_{i\in I_s}q_{s,i}\dim \rZ(T_{s,i})\Big]\\[-5pt]
-\sum_{\wh s\in \wh S\moins\{\wh\infty\}}\Big[\kappa_{\wh s,\reg}^2+\sum_{i\in I_{\wh s}}q_{\wh s,i}\dim \rZ(T_{\wh s,i})\Big]\\[-5pt]
+\sum_{i\in I_\infty^{>1}}(2p_{\infty,i}-q_{\infty,i})\dim \rZ(T_{\infty,i}).
\end{multline}

When $M$ is irreducible, the relation $\chi-\wh\chi=0$ given by Theorem \ref{th:rig} leads to an expression of $Z-\wh Z$ in terms \emph{not} depending on the formal monodromies, that~is,
\[
Z-\wh Z=r^2(\#S-2)-\wh r{}^2(\#\wh S-2)-\sum_{s\in S}\irr_s\End(\cM(*S))+\sum_{\wh s\in\wh S}\irr_{\wh s}\End(\wh\cM(*\wh S)),
\]
hence the desired relation, by combining with \eqref{eq:ZZhat}.

\begin{remarque}
On the other hand, if we assume that $I_s$, $I_{\wh s}$ and $I_\infty^{>1}$ are all empty (\ie $M$ regular at finite distance and having at $t=\infty$ a formal decomposition $\bigoplus_{\wh s\in\wh S\moins \wh\infty}(\cE^{\wh st}\otimes M_{\wh s,\reg}^\phi)$), the previous computation can be used to give a proof of Theorem \ref{th:rig}, \cf \cite{Paiva06}.
\end{remarque}

\providecommand{\bysame}{\leavevmode\hbox to3em{\hrulefill}\thinspace}
\providecommand{\MR}{\relax\ifhmode\unskip\space\fi MR }
\providecommand{\MRhref}[2]{%
  \href{http://www.ams.org/mathscinet-getitem?mr=#1}{#2}
}
\providecommand{\href}[2]{#2}


\begin{thebibliography}{10}

\bibitem{B-E04}
S.~Bloch and H.~Esnault, \emph{{Local Fourier transforms and rigidity for
  $\mathcal D$-Modules}}, Asian Math.~J. \textbf{8} (2004), no.~4, 587--606.

\bibitem{Fang07}
J.~Fang, \emph{{Calculation of local Fourier transforms for formal
  connections}}, arXiv: \url{math/07070090v1}, 2007.

\bibitem{Fu07}
L.~Fu, \emph{{Calculation of $\ell$-adic local Fourier transformations}},
  arXiv: \url{math/0702436v1}, 2007.

\bibitem{Garcia04}
R.~Garc{\'\i}a~L{\'o}pez, \emph{{Microlocalization and stationary phase}},
  Asian Math.~J. \textbf{8} (2004), no.~4, 747--768.

\bibitem{Katz96}
N.~Katz, \emph{{Rigid local systems}}, Ann. of Math. studies, vol. 139,
  Princeton University Press, Princeton, NJ, 1996.

\bibitem{Laumon87}
G.~Laumon, \emph{{Transformation de Fourier, constantes d'\'equations
  fonctionnelles et conjectures de Weil}}, Publ. Math. Inst. Hautes {\'E}tudes
  Sci. \textbf{65} (1987), 131--210.

\bibitem{Malgrange91}
B.~Malgrange, \emph{{\'E}quations diff\'erentielles {\`a} coefficients
  polynomiaux}, Progress in Math., vol.~96, Birkh{\"a}user, Basel, Boston,
  1991.

\bibitem{M-S86b}
Z.~Mebkhout and C.~Sabbah, \emph{{\S III.4 {$\mathcal{D}$}-modules et cycles
  \'evanescents}}, {Le formalisme des six op\'erations de Grothendieck pour les
  {$\mathcal{D}$}-modules coh\'erents}, Travaux en cours, vol.~35, Hermann,
  Paris, 1989, pp.~201--239.

\bibitem{Paiva06}
A.~Paiva, \emph{{Syst\`emes locaux rigides et transformation de Fourier sur la
  sph\`ere de Riemann}}, {Th\`ese}, {\'E}cole polytechnique, 2006,
  \url{http://www.imprimerie.polytechnique.fr/Theses/Files/Paiva.pdf}.

\bibitem{Roucairol07}
C.~Roucairol, \emph{{Formal structure of direct image of holonomic $\mathcal
  D$-modules of exponential type}}, Manuscripta Math. \textbf{124} (2007),
  no.~3, 299--318.

\bibitem{Bibi97}
C.~Sabbah, \emph{{{\'E}quations diff\'erentielles {\`a} points singuliers
  irr\'eguliers et ph\'enom\`ene de Stokes en dimension {$2$}}}, Ast\'erisque,
  vol. 263, Soci\'et\'e Math\'ematique de France, Paris, 2000.

\bibitem{MSaito86}
M.~Saito, \emph{Modules de {Hodge} polarisables}, Publ. RIMS, Kyoto Univ.
  \textbf{24} (1988), 849--995.

\end{thebibliography}
\end{document}